\newcommand{\R}{\mathbb R}
\newcommand{\Pp}{\mathcal P}
\renewcommand{\P}{\mathcal P}
\newcommand{\C}{\mathcal C}
\newcommand{\abs}[1]{\lvert#1\rvert}
\newtheorem{thm}{Theorem}
\newtheorem{proposition}{Proposition}
\newtheorem{lemma}{Lemma}
\newtheorem{corollary}{Corollary}
\theoremstyle{remark} 
\newtheorem{remark}{Remark}
\DeclareMathOperator{\vol}{vol}
\DeclareMathOperator{\area}{area}
\DeclareMathOperator{\sign}{sign}
\DeclareMathOperator{\conv}{conv}
\newcommand{\eps}{\omega}
\newcommand{\la}{\lambda}
\title{Equipartitions and Mahler volumes of symmetric convex bodies}
\author[Fradelizi, Hubard, Meyer, Rold\'an-Pensado, Zvavitch]{Matthieu Fradelizi, Alfredo Hubard,  Mathieu Meyer, Edgardo Rold\'an-Pensado and Artem Zvavitch}
\thanks{AH was partially supported by the ANR-17-CE40-0033 (SoS) and ANR-17-CE40-0018 (CAAPS). ERP was partially supported by PAPIIT project IA102118 and CONACyT project 282280.  MF was partially supported by  the Agence Nationale de la Recherche, project GeMeCoD (ANR 2011 BS01 007 01).  AZ was partially supported by the U.S. National Science Foundation Grant DMS-1101636 and by La Comue Universit\'e Paris-Est and B\'ezout Labex of Universit\'e Paris-Est funded by ANR, reference ANR-10-LABX-58.  AH and ERP thank Casa M\'atematica Oaxaca and \emph{Mat\'ematicos Mexicanos en el Mundo}.} 
\subjclass[2010]{52A20, 52A40,  53A15, 52B10.}
 \keywords{convex bodies, polar bodies, volume product, Mahler's conjecture, Blaschke-Santal\'o inequality, Equipartitions.}
\begin{document}

\begin{abstract}
Following ideas of Iriyeh and Shibata we give a short proof of the three-dimensional Mahler conjecture  for symmetric convex bodies. 
Our contributions include, in particular, simple self-contained proofs of their two key statements. The first of these is an equipartition (ham sandwich type) theorem which refines a celebrated result of Hadwiger and, as usual, can be proved using ideas from equivariant topology. The second is an inequality relating the product volume to areas of certain sections and their duals. Finally we give an alternative proof of the characterization of convex bodies that achieve the equality case and establish a  new stability result. 
\end{abstract}

\maketitle

\section{Introduction}

A {\it convex body} is a compact convex subset of $\R^n$ with non empty interior.  We say that $K$ is {\it symmetric} if it is centrally symmetric with its center at the origin, i.e. $K=-K$.   We write $|L|$ for the $k$-dimensional Lebesgue measure (volume)  of a measurable set $L\subset \R^n$, where $k$ is the dimension of the minimal affine subspace containing $L$. We will refer to \cite{AGM, Sc} for general references for convex bodies.
The polar body $K^\circ$ of a symmetric convex body $K$  is defined by
$K^\circ=\{y; \langle x,y\rangle\le1, \forall x\in K\}$
and its \emph{volume product} by
$$
\Pp(K) = |K| |K^\circ|.
$$
It is a linear invariant of $K$, that is
$\Pp(TK)=\Pp(K)$ for every linear isomorphism $T: \R^n \rightarrow\R^n$.  The set of convex symmetric bodies in $\R^n$ wih the Banach-Mazur distance is compact and the product volume $K\mapsto \Pp(K)$ is a continuous function (see the definition in Section \ref{sec:stability} below) hence the maximum and minimum values of $\Pp(K)$ are attained.
The Blaschke-Santal\'o inequality states that
$$
\Pp(K) \le \Pp(B^n_2),
$$
where $B^n_2$ is the Euclidean unit ball. Moreover the previous inequality is an equality if and only if $K$ is an ellipsoid (\cite{San1949}, \cite{Pet}, see \cite{MP}
or also \cite{MR2} for a simple proof of both the inequality and
the case of equality).
In \cite{Mah1939} Mahler conjectured that for every symmetric convex body $K$ in $\R^n$,
$$
\Pp(K)\ge \Pp(B_\infty^n)=\frac{4^n}{n!},
$$
where $B_\infty^n=[-1;1]^n$ and proved it for $n=2$ \cite{Mah1939m}. Later the conjecture was proved for unconditional convex bodies \cite{SR,Meyer}, zonoids \cite{Reisner, GMR} and other special cases 
\cite{Kar19, Barthe, FMZ}. The conjecture was proved in \cite{BourMil} up to a multiplicative $c^n$ factor for some constant $c>0$, see also \cite{Kuperberg, Na, Giannop}. It is also known that the cube, its dual $B_1^n$ and more generally Hanner polytopes are local minimizers \cite{NazZva, Kim} and that the conjecture follows from conjectures in systolic geometry \cite{ABT} and symplectic geometry \cite{AKY,Kar19}. 

Iriyeh and Shibata \cite{IS} came up with a beautiful proof of this conjecture in dimension $3$ that generalizes a proof of Meyer \cite{Meyer} in the unconditional case by adding two new ingredients: differential geometry and a ham sandwich type (or equipartition) result. In this mostly self-contained note we provide an alternative proof and derive the three dimensional symmetric Mahler conjecture following their work.

\begin{thm}[\cite{IS}] \label{thm:main}
For every convex symmetric body $K$ in $\R^3$,
$$\abs{K}\abs{K^\circ} \geq  \abs{B_1^3}\abs{B_\infty^3}=\frac{32}{3}.$$
Equality is achieved if and only if $K$ or $K^{\circ} $ is a parallelepiped.
\end{thm}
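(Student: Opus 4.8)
The plan is to follow the strategy of Iriyeh--Shibata as outlined in the introduction, combining an equipartition theorem with a volume-product inequality for sections, and to reduce the three-dimensional Mahler lower bound to a two-dimensional estimate on planar sections and their duals. First I would invoke the equipartition (ham sandwich type) result mentioned in the abstract: given the symmetric body $K\subset\R^3$, produce three hyperplanes through the origin that simultaneously cut $K$ into eight pieces of equal volume $|K|/8$, with some additional orthogonality or structural control on how the cutting planes interact with the dual body $K^\circ$. This equivariant-topological step is what refines Hadwiger's theorem, and I expect it to be stated as a lemma earlier in the paper, so I would simply apply it to fix a convenient coordinate frame in which $K$ is equipartitioned by the three coordinate planes.

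Next I would exploit the central symmetry to pass to the two-dimensional sections. Writing $K_i=K\cap H_i$ for the sections of $K$ by the three coordinate hyperplanes and denoting by $(K_i)^\circ$ (the polar taken inside $H_i$) the dual two-dimensional sections, I would use the inequality relating $\Pp(K)$ to the products $|K_i|\,|(K_i)^\circ|$ (the ``second key statement'' advertised in the abstract). The idea is that the equipartition guarantees each of the eight octant pieces of $K$ has controlled volume, and that the octant of $K^\circ$ can be bounded below in terms of the areas of the planar duals. Combining these, I would reduce the desired bound $\Pp(K)\ge 32/3$ to verifying a product-of-areas inequality for the planar sections, where the two-dimensional Mahler inequality (Mahler's theorem for $n=2$, with its known equality case) can be applied.

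The main obstacle, as always in this scheme, is the geometric inequality connecting the three-dimensional volume product to the two-dimensional sectional data: one must carefully integrate over the octants and show that the contributions of $|K|$ and $|K^\circ|$ combine to at least $32/3$, with no slack except in the extremal configuration. This requires a precise decomposition of $K^\circ$ into cone-like pieces subtended by the dual sections and a convexity/rearrangement argument to bound their volumes from below; the differential-geometric ingredient enters here in controlling the areas of the curved boundary pieces. I would handle the inequality by establishing it first under the equipartition normalization and then checking that equality in every applied estimate (the planar Mahler inequalities and the sectional volume bound) forces each section to be a parallelogram, which propagates to force $K$ or $K^\circ$ to be a parallelepiped.

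Finally, for the equality characterization and stability I would run the equality analysis backwards: equality in the chain forces equality in the two-dimensional Mahler inequality on each coordinate section, hence each section is a parallelogram, and equality in the sectional volume bound forces the octant pieces to be genuine cones over these parallelograms. Piecing the octants together with the symmetry $K=-K$ then forces $K$ (or dually $K^\circ$) to be a parallelepiped. For the stability statement I would quantify each inequality in the chain, so that a near-extremal $K$ yields near-equality in the planar Mahler inequalities, and invoke a stability version of the two-dimensional case together with the compactness of the space of symmetric bodies under the Banach--Mazur distance to conclude that $K$ is Banach--Mazur close to a parallelepiped.
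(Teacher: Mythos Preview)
Your overall plan matches the paper's: equipartition, then a Stokes-type sectional inequality reducing $\Pp(K)$ to the products $|K\cap e_i^\perp|\,|(K\cap e_i^\perp)^\circ|$, then the planar Mahler bound. Two concrete points, however, are off and would block the argument as you describe it.

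First, the extra structure in the equipartition is not ``control on how the cutting planes interact with the dual body $K^\circ$''. Theorem~\ref{thm:equipart} requires instead that each \emph{section} $K\cap H_i$ be cut by the other two planes into four pieces of equal \emph{area}. This area condition on $K$ (not $K^\circ$) is exactly what makes the Stokes computation \eqref{eq:sec} produce $V((\partial K)(\eps))=\sum_i\tfrac{|K\cap e_i^\perp|}{4}\eps_i e_i$; without it the telescoping that leads to \eqref{ineq:end} does not close up, and no amount of information about $K^\circ$ alone substitutes for it. The genuinely new analytic ingredient you gloss as ``differential-geometric'' is the vector $V(A)=\int_A n_K\,dH$ together with Corollary~\ref{coro}, which is what converts the octant volumes into inner products of these flux vectors.

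Second, the equality analysis is more delicate than ``octant pieces are cones over parallelograms, so $K$ is a parallelepiped''. After the planar equality case forces each $K\cap e_i^\perp$ to be a parallelogram, Lemma~\ref{lm:square} upgrades them to squares, and the paper then runs a four-case analysis according to how many of these squares coincide with $\conv(\pm e_j:j\neq i)$, using Lemma~\ref{observation} and the equality clause of Corollary~\ref{coro} to locate actual faces of $K$; in Case~4 one even has to build a new partition of $K^\circ$ and rerun the main inequality. Finally, stability is not part of Theorem~\ref{thm:main}: in the paper it is a separate result obtained by combining the equality characterization with the local stability of \cite{NazZva} through a soft compactness lemma (Lemma~\ref{lem:metric}), rather than by quantifying each inequality in the chain as you propose.
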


In Section \ref{sec:equipart} we prove an equipartition result. In Section \ref{sec:main} we derive the key inequality and put it together with the aforementioned equipartition to prove Theorem \ref{thm:main}. In Section \ref{sec:equality} we prove the equality cases of Theorem \ref{thm:main} and in Section  \ref{sec:stability} we present a new corresponding stability result. \\

\paragraph{{\bf{Acknowledgements}}} 
We thank  Shlomo Reisner for his observations on the paper of Iriyeh and Shibata \cite{IS} and Pavle Blagojevi\'c, Roman Karasev and Ihab Sabik for comments on an earlier version of this work. 

\section{An equipartition result}\label{sec:equipart}

A celebrated result of Hadwiger \cite{Hadwiger} who answered a question of Gr\"unbaum \cite{Grunbaum} shows that for any absolutely continuous finite measure in $\R^3$ there exists three planes for which any octant has $\frac 1 8 $ of the measure. There is a vast literature around Hadwiger's theorem, see \cite{FDEP, Ramos, MVZ,Makeev2007,  BlaZie18}. Theorem \ref{thm:equipart} below which corresponds to formula (15) in \cite{IS} refines Hadwiger's theorem when the measure is centrally symmetric in a way that is reminiscent of the spicy chicken theorem \cite{kha,AKK}. 

\begin{thm}\label{thm:equipart}
Let $K \subset \R^3$ be a  symmetric convex body. Then there exist planes $H_1,H_2,H_3$ passing through the origin such that:
\begin{itemize}
	\item they split $K$ into $8$ pieces of equal volume, and
	\item for each plane $H_i$, the section $K \cap H_i$ is split into $4$ parts of equal area by the other two planes.
\end{itemize}
\end{thm}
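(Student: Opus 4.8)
The plan is to set up a configuration-space/test-map scheme: encode the three planes by their unit normals, rewrite both bullet points as the vanishing of a single $\R^6$-valued map, and force a zero by equivariant topology. Parametrize three planes through the origin by unit normals $u_1,u_2,u_3\in S^2$, so that $H_i=u_i^\perp$, and label the octants by sign vectors $\epsilon=(\epsilon_1,\epsilon_2,\epsilon_3)\in\{\pm1\}^3$ via $\sign\langle x,u_i\rangle=\epsilon_i$. Each octant volume is a linear combination of the correlation integrals $a_S=\int_K\prod_{i\in S}\sign\langle x,u_i\rangle\,dx$, and the eight octants have equal volume precisely when $a_S=0$ for every nonempty $S$. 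Since $K=-K$ and $x\mapsto-x$ reverses every $\sign\langle x,u_i\rangle$, one gets $a_S=(-1)^{\abs{S}}a_S$, so all odd-order $a_S$ vanish automatically and the volume requirement collapses to $a_{12}=a_{13}=a_{23}=0$. Likewise, inside $H_i$ the two lines $H_i\cap H_j$ and $H_i\cap H_k$ cut the symmetric planar section $K\cap H_i$ into four quadrants of equal area iff $b_i:=\int_{K\cap H_i}\sign\langle x,u_j\rangle\,\sign\langle x,u_k\rangle\,d\mathcal H^2=0$, the odd planar coefficients again vanishing by central symmetry of the section. Both bullets therefore hold simultaneously exactly when $F:=(a_{12},a_{13},a_{23},b_1,b_2,b_3)=0$.

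Next I would analyze the symmetry of the test map $F\colon(S^2)^3\to\R^6$. It is continuous on the non-degenerate configurations, where $u_1,u_2,u_3$ are linearly independent, and it carries a natural action of $(\Z_2)^3$ by antipodes on the factors together with an $S_3$ permuting the indices: flipping $u_i\mapsto-u_i$ fixes the plane $H_i$ but reverses $\sign\langle\cdot,u_i\rangle$, so $a_{jk}$ changes sign exactly when $i\in\{j,k\}$, while $b_j$ changes sign exactly when $i\ne j$. In particular every coordinate is odd under a suitable antipodal map of the domain, and the three pairs $(a_{23},b_1)$, $(a_{13},b_2)$, $(a_{12},b_3)$ transform under the same character of $(\Z_2)^3$, exhibiting $\R^6$ as three copies of a two-dimensional representation. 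The topological heart of the argument is then to show that no $(\Z_2)^3$-equivariant map $(S^2)^3\to S^5=S(\R^6)$ can exist, so that $F$ must vanish somewhere; this Borsuk--Ulam-type statement is the standard equivariant-topology input (provable via the Fadell--Husseini index or a direct degree computation) and fits the dimension balance of six conditions against six parameters.

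The main obstacle I anticipate is not the topological conclusion but the degeneracies. The map $F$ is ill-defined, or at least discontinuous, where two normals coincide or where the three become coplanar: there the octant decomposition collapses and each $b_i$ involves a vanishing linear form, so a ``zero'' on that stratum would not yield a genuine partition. I would handle this by first establishing the statement for a dense class of bodies, say smooth and strictly convex $K$, and then passing to the general case by a compactness/limiting argument in the Banach--Mazur topology; alternatively one compactifies the configuration space and checks directly that $F$ cannot vanish on the degenerate stratum, so that the zero furnished by the equivariant argument corresponds to three genuinely distinct planes in general position. Verifying that the produced zero lies in the non-degenerate stratum, rather than the equivariant existence statement itself, is where I expect the real effort to lie; the symmetry reduction of the first paragraph is then routine bookkeeping.
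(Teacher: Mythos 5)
Your first paragraph is fine: the reduction of both bullet points to the vanishing of $(a_{12},a_{13},a_{23},b_1,b_2,b_3)$ is correct, and so is your identification of the $(\Z_2)^3$-representation, namely $V=2\left(\chi_1\chi_2\oplus\chi_1\chi_3\oplus\chi_2\chi_3\right)$ with the pairs $(a_{23},b_1)$, $(a_{13},b_2)$, $(a_{12},b_3)$ isotypic. The fatal gap is the step you call standard: the claim that no $(\Z_2)^3$-equivariant map $(S^2)^3\to S(V)$ exists is \emph{false}. Fix $a\in\R^3\setminus\{0\}$ and define
\[
\Phi(u_1,u_2,u_3)=\bigl(\langle u_1,u_2\rangle,\ \det(u_1,u_2,a),\ \langle u_1,u_3\rangle,\ \det(u_1,u_3,a),\ \langle u_2,u_3\rangle,\ \det(u_2,u_3,a)\bigr).
\]
Each pair $\bigl(\langle u_i,u_j\rangle,\det(u_i,u_j,a)\bigr)$ changes sign exactly when $u_i$ or $u_j$ is flipped, so $\Phi$ is equivariant for precisely your representation $V$. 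Moreover $\Phi$ never vanishes: the three scalar products vanish only when $(u_1,u_2,u_3)$ is an orthonormal frame, and then $\det(u_i,u_j,a)=\pm\langle u_k,a\rangle$ ($k$ the third index), so vanishing of the three determinants would force $a$ to be orthogonal to a basis of $\R^3$. Hence $\Phi/\abs{\Phi}$ is a $(\Z_2)^3$-map $(S^2)^3\to S(V)$, and your Borsuk--Ulam input, together with the whole proof, collapses. The structural reason is visible in your own bookkeeping: every character in $V$ is a product of \emph{two} sign characters, so the total flip $\gamma_1\gamma_2\gamma_3$ acts trivially on $S(V)$ (and every nontrivial group element has a nonempty fixed-point set there), while it acts freely on the domain; no obstruction can survive this. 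Consistently, the Fadell--Husseini/Euler-class obstruction you invoke dies: in $\mathbb{F}_2[x,y,z]$ one has $e(V)=(x+y)^2(x+z)^2(y+z)^2= x^4y^2+x^4z^2+x^2y^4+x^2z^4+y^4z^2+y^2z^4\equiv 0$ modulo $(x^3,y^3,z^3)$, which is exactly the index of $(S^2)^3$. Matching six conditions against six parameters is not enough.

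Two further remarks. First, you could try to restore an obstruction by using the full group $(\Z_2)^3\rtimes S_3$ (your test map is equivariant for it, since $a_{ij}$ and $b_k$ are symmetric in the indices, whereas my $\Phi$ is not, the determinant being antisymmetric); but whether such Weyl-group product schemes admit equivariant maps is precisely the delicate question of the Gr\"unbaum--Hadwiger--Ramos literature \cite{BlaZie18}, where several published arguments failed exactly because the obstruction vanishes --- it is nothing like routine, and you give no argument. Second, the degeneracy issue you expected to be the hard part is actually harmless: on the stratum $u_i=\pm u_j$ one has $a_{ij}=\pm\abs{K}\neq 0$, so after multiplying each $b_i$ by the invariant cutoff $(1-\langle u_i,u_j\rangle^2)(1-\langle u_i,u_k\rangle^2)$ the test map becomes continuous everywhere with no spurious zeros. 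The paper avoids all of this by shrinking the configuration space instead of enlarging the target: it parametrizes configurations by $U=(u_1,u_2,u_3)\in SO(3)$, building into the construction the quartering of the section $K\cap u_1^\perp$ by two lines and the volume-bisection by the planes $H_2,H_3$ through those lines, so that only \emph{three} scalar conditions remain; the symmetry group is then the dihedral group $D_4$ acting \emph{freely} on $SO(3)$, and a zero of the test map is produced by a parity/bordism argument \`a la B\'ar\'any and Klartag, comparing with an explicit reference map $F_0$ having a single transversal $D_4$-orbit of zeros. A free action on the domain plus a degree-type argument is what makes that scheme work; the $(\Z_2)^3$-symmetry alone provably yields nothing.
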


Notice that in the proof of this theorem, the convexity of $K$ is not used. The convex body $K$ could be replaced by a  symmetric measure defined via a density function and a different  symmetric density function could be used to measure the areas of the sections. \\ \begin{proof}[Proof of Theorem \ref{thm:equipart}]
The scheme of this proof is classical in applications of algebraic topology to discrete geometry. It is often referred to as the configuration-space/test-map scheme (see e.g. Chapter 14 in \cite{TOG2017}). 
Assume that $H\subset\R^3$ is an oriented plane with outer normal $v$. Let us denote the halfspaces 
$H^+=\{x; \langle x,v\rangle> 0\}$ and $H^-=\{x; \langle x,v\rangle< 0\}$. If $u\in H^+$, we say that $u$ is on the positive side of $H$.

Given the convex body $K\subset\R^3$, we parametrize a special family of triplets of planes by orthonormal bases $U=(u_1,u_2,u_3)\in SO(3)$ in the following way.

Let $H_1$ be the  plane $u_1^\perp=\{x; \langle x,u_1\rangle=0\}$.   
Let $l_2,l_3\subset H_1$ be the unique pair of  lines through the origin (as in the left part of Figure \ref{fig}) with the following properties: 
\begin{itemize}
	\item $u_2,u_3$ are directed along the angle bisectors of $l_2$ and $l_3$,
	\item  the lines $l_2$ and $l_3$ split $H_1\cap K$ into four regions of equal area,
	\item  For any $x \in l_3$,  $\langle x, u_2 \rangle$ has the same  sign  as  $\langle x, u_3\rangle$.
	
\end{itemize}
For $i=2,3$ let  $H_i$  be   the unique  plane  containing $l_i$ that splits $K\cap H_1^+$ into two parts of equal volume  and let   $H_i^+$ be the half-space limited by $H_i$  which contains  $u_2$. Thus
\begin{itemize}
\item  $\abs{K\cap H_1  \cap (H_2^+ \cap H_3^+)}=\frac{1}{4}\abs{K \cap H_1}$,
\item  $\abs{K \cap (H_1^+ \cap H_i^+)}=\frac{1}{2}\abs{K \cap H_1^+} =\frac{1}{4}\abs{K}$ for $i=2,3$.
\end{itemize}
By using standard arguments it can be seen that the lines $\{l_i\}_{i=2,3}$, the planes  $\{H_i\}_{i=1,2,3}$ and  the half-spaces $\{H_i^+\}_{i=1,2,3}$  are uniquely determined and depend continuously on $U=(u_1,u_2,u_3)$.

\begin{figure}
\includegraphics[width=\textwidth]{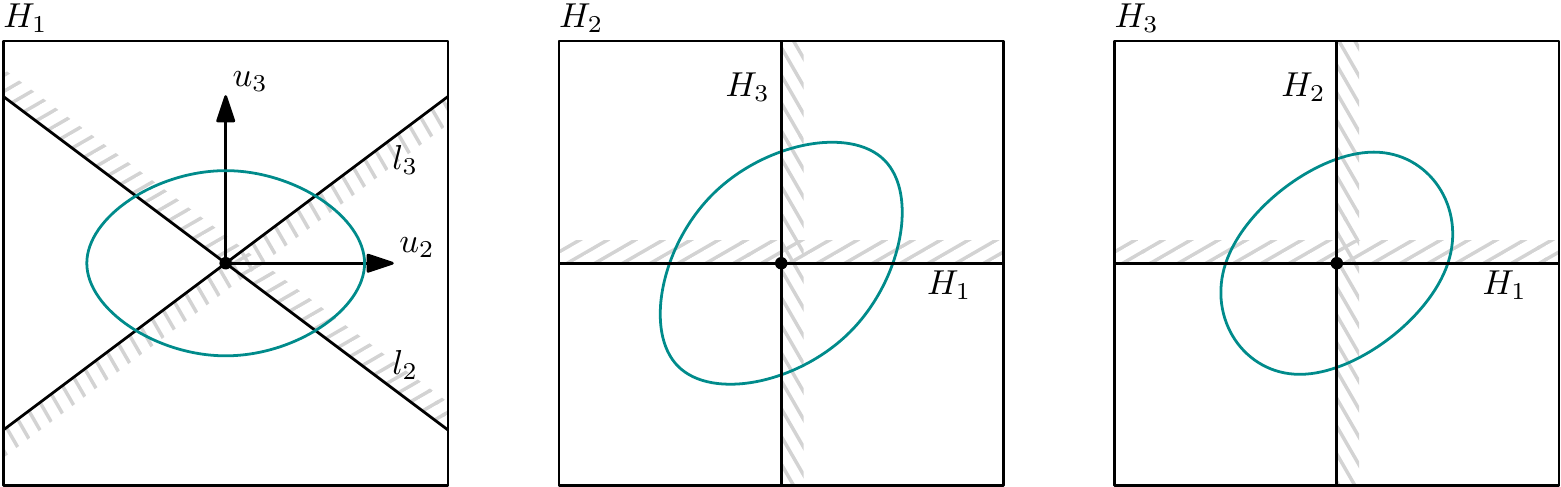}
\caption{The main parts of our construction restricted to the planes $H_1$, $H_2$ and $H_3$. Gray marks are used on the positive sides of oriented lines and planes. In the middle and right figures the horizontal lines coincide with $l_2$ and $l_3$, respectively.}
\label{fig}
\end{figure}

Now we  define a continuous test-map $F_1=(f_1, f_2, f_3):SO(3)\to\R^3$, by
\begin{align*}
	f_1(U)&=\frac 18\vol(K) - \vol(K\cap H_1^+\cap H_2^+\cap H_3^+),\\
	f_2(U)&=\frac 14\area(K\cap H_2) - \area(K\cap H_2\cap H_1^+\cap H_3^+),\\
	f_3(U)&=\frac 14\area(K\cap H_3) - \area(K\cap H_3\cap H_1^+\cap H_2^+),
\end{align*}
where $H_i = H_i(U)$, for $i=1,2,3$. Clearly, any zero of $F_1$ corresponds to a partition with the desired properties.

The dihedral group $D_4$ of eight elements with generators $g_1$ of order two and $g_2$ of order four, acts freely on $SO(3)$ by
\begin{align*}
g_1 \cdot (u_1,u_2,u_3)&=(-u_1,u_2,-u_3),\\
g_2\cdot(u_1,u_2,u_3)&=(u_1,u_3,-u_2).
\end{align*}
It also acts on $\R^3$ linearly by
\begin{align*}
g_1\cdot(x_1,x_2,x_3)&=(-x_1,-x_3,-x_2),\\
g_2\cdot(x_1,x_2,x_3)&=(-x_1,x_3,-x_2).
\end{align*}
Since $K$ is symmetric, $F_1$ is $D_4$-equivariant under the actions we just described, {\em i.e.} 
\begin{equation}\label{eq:zho}
g_i\cdot F_1(U)=F_1(g_i\cdot U),\quad\hbox{for all $U=(u_1,u_2,u_3)\in SO(3)$}.
\end{equation}
Indeed, observe that $g_1$ and $g_2$ transform $(H_1^+,H_2^+,H_3^+)$ into $(H_1^-,H_3^+,H_2^+)$ and $(H_1^+,H_3^-,H_2^+)$, respectively (see Figure \ref{fig}). Next, to establish  (\ref{eq:zho}),  observe that since $K$ is  symmetric, the volume of a set of the form $K\cap H_1^\pm\cap H_2^\pm\cap H_3^\pm$ can only have two possible values which depend only on the parity of the number of positive half-spaces used. The same is true for the area of a set of the form $K \cap H_2\cap H_1^\pm\cap H_3^\pm$ and $K \cap H_3\cap H_1^\pm\cap H_2^\pm$. 

Consider the function $F_0: SO(3) \to \R^3$
\[F_0(U)=F_0(u_1,u_2,u_3)=
\begin{pmatrix}
u_{2,2} u_{3,2}\\
u_{3,1} + u_{2,1}\\
u_{3,1} - u_{2,1}
\end{pmatrix},
\]
where $u_{i,j}$ represents the $j$-entry of the vector $u_i$. A direct computation shows that $F_0$ is $D_4$-equivariant and has exactly $8=\lvert D_4\rvert$ zeros in $SO(3)$  given by the orbit under $D_4$ of the identity matrix $I=(e_1, e_2, e_3)$, where $(e_1, e_2,e_3)$ is the canonical basis.
Furthermore, the zeros of $F_0$ are transversal to $SO(3)$. To see this, consider the space $SO(3)$ as a smooth manifold in the space $M_{3\times 3}\simeq\R^9$ of $3\times 3$ matrices.
For $i=1,2,3$, let $R_i(\theta)$ be the rotation in $\R^3$ of angle $\theta$ around the vector $e_i$. For example, the matrix corresponding to $i=1$ is of the form
$$R_1(\theta)=\begin{pmatrix}
1 & 0 & 0\\
0 & \cos(\theta) & -\sin(\theta)\\
0 & \sin(\theta) & \cos(\theta) 
\end{pmatrix}.$$
The vectors $v_i=\frac{d R_i}{d t}(0)$ generate the tangent  space to $SO(3)$ at  $I$. Let $D F_0$ be the derivative of $F_0$ at $I$, then it can be verified that $\abs{\det (D F_0 \cdot (v_1,v_2,v_3))}=2\neq 0$ which implies the transversality at $I$. The transversality at the remaining $7$ zeros follows immediately from the $D_4$-equivariance of $F_0$.

The result now follows directly from Theorem 2.1 in \cite{klartag}. The idea of this theorem can be traced back to Brouwer and was used in the equivariant setting by B\'ar\'any to give an elegant proof of the Borsuk-Ulam theorem. B\'ar\'any's proof is explained in the piecewise linear category in Section 2.2 of \cite{Matousek}. For the reader's convenience we give a sketch of the proof of Theorem 2.1 in \cite{klartag} in our case.

Consider the continuous $D_4$-equivariant function defined on $SO(3)\times [0,1]$ by
\[F(U,t):=(1-t) F_0(U)+t F_1(U).\] 
We approximate $F$ by a smooth $D_4$-equivariant function $F_\varepsilon$ such that $F_\varepsilon(U,0)=F(U,0)=F_0(U)$, $\sup_{U,t}\abs{F(U,t)-F_\varepsilon (U,t)}<\varepsilon$ and $0$ is a regular value of $F_\varepsilon$. The existence of such a smooth equivariant function follows from Thom's transversality theorem \cite{Thom1954} (see also \cite[pp. 68--69]{guillemin2010}), an elementary direct proof can be found in Section 2 of \cite{klartag}. The implicit function theorem implies that $Z_\varepsilon=F_\varepsilon^{-1}(0,0,0)$ is a one dimensional smooth submanifold of $SO(3)\times [0,1]$ on which $D_4$ acts freely. The submanifold $Z_\varepsilon$ is a union of connected components which are diffeomorphic either to an interval, or to a circle, the former having their boundary on $SO(3)\times \{0,1\}$.
The set $Z_\varepsilon$ has an odd number (1) of orbits under $D_4$ intersecting $SO(3)\times \{0\}$. Denote by $\alpha \colon [0,1] \to SO(3)\times [0,1]$ a topological interval of $F_\varepsilon^{-1}(0)$. Let $g\in D_4$, observe that $g(\alpha(0))\neq \alpha(1)$, indeed, if that is the case then $g$ maps $\alpha([0,1])$ to itself and hence has a fixed point, but this would imply that the action of $D_4$ is not free which is a contradiction. We conclude that an odd number of orbits of $Z_\varepsilon$ must intersect $SO(3)\times \{1\}$, i.e. there exists $U_\varepsilon\in SO(3)$ such that $F_\varepsilon(U_\varepsilon,1)=0$. Since the previous discussion holds for every $\varepsilon$, there exists $U\in SO(3)$ such that $F(U,1)=F_1(U)=0$. 
\end{proof}
\begin{remark} 
Let us restate the punch line of the above argument in algebraic topology language: $F_\varepsilon^{-1}(0)\cap SO(3)\times \{0\}$ is a non-trivial $0$-dimensional homology class of $SO(3)$ in the $D_4$-equivariant homology with $Z_2$ coefficients, on the other hand $F_\varepsilon^{-1}(0)$ is a $D_4$-equivariant bordism so $F_\varepsilon^{-1}(0)\cap SO(3)\times \{1\}$ must also be non-trivial in this equivariant homology, and in particular, non empty.
\end{remark}

\begin{remark}
Theorem \ref{thm:equipart} can also be shown using obstruction theory with the aide of a $D_4$-equivariant CW-decomposition of $SO(3)$ (see \cite{BLAG}). 
\end{remark}

\begin{remark}\label{rm:linear}  We shall say for the rest of the paper that $K$ is {\em equipartitioned  by the standard orthonormal basis} $(e_1, e_2, e_3)$ if $(\pm e_1, \pm e_2, \pm e_3) \in \partial K$ and the planes $H_i=e_i^\perp$ satisfy the conditions of Theorem \ref{thm:equipart}. Applying Theorem \ref{thm:equipart} it follows that for every convex, symmetric  body $K \subset \R^3$ there exists $T \in GL(3)$  such that $TK$ is equipartitioned by the standard orthonormal basis.
\end{remark}

\section{Symmetric Mahler conjecture in dimension 3}\label{sec:main}

For a sufficiently regular oriented hypersurface $A \subset \R^n$, define the vector 
 \[V(A)=\int_A n_A(x)dH(x).\]
where $H$ is the $(n-1)$-dimensional Hausdorff measure on $\R^n$ such that $|A|=H(A),$ for $A$ contained in a hyperplane,  and $n_A(x)$ denotes the unit normal to $A$ at $x$ defined by its orientation.
Notice that for $n=3$ one has
\begin{equation}\label{wedge}
V(A):=\left(\int_{A}dx_2\wedge dx_3,\int_{A} dx_1 \wedge dx_3,\int_{A} dx_1 \wedge dx_2 \right),
\end{equation}
because of the following equality between vector valued differential forms
\begin{equation}
n_A(x) dH(x)=(dx_2\wedge dx_3, dx_1\wedge dx_3, dx_1\wedge dx_2). \label{identity}
\end{equation}
Indeed let $T_x$ be the tangent plane at $x$, for a pair of tangent vectors $u,v \in T_x$, $dS(x)(u,v)$ is the signed area of the parallelogram spanned by $u$ and $v$. Let $\theta$ be the angle of intersection between $T_x$ and $e_i^\perp$, and observe that $n_A(x)_i=\cos(\theta)$.
On the other hand since the form $dx_j \wedge d x_k$ doesn't depend on the value of $x_i$, we have \[dx_j\wedge dx_k(u,v)=dx_j\wedge dx_k(P_{e_i}u,P_{e_i}v)=\det(P_{e_i}u,P_{e_i}v).\] 
This is the signed area of the projection of the oriented parallelogram spanned by $u$ and $v$ on  the coordinate hyperplane $e_i^\perp$. Thales theorem implies  \[dx_j\wedge dx_k(P_{e_i}u,P_{e_i}v)= \cos(\theta) dH(x)(u,v)= (n_A(x))_i dH(x)(u,v),\]
establishing identity (\ref{identity}) above. 

For any convex body $K$ in $\R^n$ containing $0$ in its interior, the orientation of a subset $A$ of $\partial K$ is given by exterior normal $n_K$ to $K$ so that $V(A)=\int_A n_K(x)dH(x)$; we define $\C(A):=\{rx; 0\le r\le1, x\in A\}$, and observe that
\[\abs{\C(A)}=\frac{1}{n}\int_A \langle x,n_K(x)\rangle  dH(x).\]

If $K_1$ and $K_2$ are sufficiently regular Jordan regions of $\R^n$ and $K_1\cap K_2$ is an  hypersurface, we define 
$K_1\overrightarrow{\cap} K_2$ 
to be oriented according to the outer normal of $\partial K_1$. So even though $K_1\overrightarrow{\cap} K_2$ equals $K_2 \overrightarrow{\cap} K_1$ as sets, they may have opposite orientations. We denote by $[a,b]= \{ (1-t)a+tb: t \in [0,1]\}$ the segment joining $a$ to $b$. The inequality of the following proposition generalizes inequality (3) in \cite{Meyer} and was proved in \cite{IS} Proposition 3.2.
\begin{proposition}\label{ineq}
Let $K$ be a symmetric convex body in  $\R^n$. Let $A$ be a  Borel subset of $\partial K$ with $|\C(A)| \not=0$, then 
\[
\frac{1}{n}\langle x, V(A)\rangle \leq \abs{\C(A)},\ \forall x\in K.
 \]
So $\frac{V(A)}{n|\C(A)|}\in K^\circ$ and if moreover for some $x_0\in K$ one has $\langle x_0,\frac{V(A)}{n|\C(A)|}\rangle=1$ then $x_0\in\partial K$,  
$\frac{V(A)}{n|\C(A)|}\in \partial K^\circ$ and $[x,x_0]\subset\partial K$, for almost all $x\in A$.
\end{proposition}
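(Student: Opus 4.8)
The plan is to reduce the inequality to the pointwise supporting-hyperplane property of $K$ and then integrate over $A$. Writing $V(A)=\int_A n_K(y)\,dH(y)$, I would first observe that for a fixed $x\in K$,
\[
\langle x, V(A)\rangle=\int_A \langle x, n_K(y)\rangle\,dH(y),
\]
while the formula for $\abs{\C(A)}$ recorded just above the proposition reads
\[
n\abs{\C(A)}=\int_A \langle y, n_K(y)\rangle\,dH(y).
\]
Thus the claimed inequality $\frac1n\langle x,V(A)\rangle\le\abs{\C(A)}$ is equivalent to $\int_A\langle x-y,n_K(y)\rangle\,dH(y)\le0$, and I would prove this by showing the integrand is nonpositive pointwise.

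The pointwise estimate is exactly convexity: for $H$-a.e.\ $y\in\partial K$ the outer normal $n_K(y)$ exists and the hyperplane through $y$ with that normal supports $K$, so $\langle x-y,n_K(y)\rangle\le0$ for every $x\in K$. Integrating gives the inequality, and dividing by $n\abs{\C(A)}>0$ yields $\langle x,p\rangle\le1$ for all $x\in K$, that is $p:=\frac{V(A)}{n\abs{\C(A)}}\in K^\circ$.

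For the equality statement, suppose $x_0\in K$ satisfies $\langle x_0,p\rangle=1$; this is precisely equality in the inequality at $x=x_0$, i.e.\ $\int_A\langle x_0-y,n_K(y)\rangle\,dH(y)=0$. Since the integrand is nonpositive a.e., it must vanish for a.e.\ $y\in A$, giving $\langle x_0,n_K(y)\rangle=\langle y,n_K(y)\rangle$ for a.e.\ $y\in A$. I would then extract the three conclusions from standard convexity. Since $p\neq0$ and $\langle\cdot,p\rangle\le1$ on $K$ with equality at $x_0$, the point $x_0$ maximizes a nonzero linear functional over $K$ and hence $x_0\in\partial K$; dually, if $p$ were interior to $K^\circ$ then $(1+t)p\in K^\circ$ for small $t>0$, contradicting $\langle x_0,(1+t)p\rangle=1+t>1$, so $p\in\partial K^\circ$. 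Finally, for each $x\in A$ at which the pointwise equality holds, both $x$ and $x_0$ lie on the supporting hyperplane $\{z:\langle z,n_K(x)\rangle=\langle x,n_K(x)\rangle\}$; as the intersection of this hyperplane with $K$ is a face of $K$ contained in $\partial K$, convexity forces $[x,x_0]\subset\partial K$.

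The only point requiring care is measure-theoretic: I expect the main obstacle to be verifying that $n_K$ is defined $H$-a.e.\ on $\partial K$ and that the relevant exceptional sets are $H$-null, so that the pointwise inequality legitimately upgrades ``integral $=0$'' to ``integrand $=0$ a.e.'' The geometric ingredients — the supporting-hyperplane inequality and the identification of a supporting-hyperplane section with a boundary face — are standard and present no real difficulty.
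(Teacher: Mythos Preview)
Your proposal is correct and follows essentially the same argument as the paper: integrate the pointwise supporting-hyperplane inequality $\langle x,n_K(z)\rangle\le\langle z,n_K(z)\rangle$ over $A$ to get the inequality and membership in $K^\circ$, and in the equality case use that a nonpositive integrand with zero integral vanishes a.e.\ to force $\langle x_0,n_K(x)\rangle=\langle x,n_K(x)\rangle$ and hence $[x,x_0]\subset\partial K$. Your write-up is in fact a bit more explicit than the paper's on why $x_0\in\partial K$ and $p\in\partial K^\circ$, and the measure-theoretic caveat you flag is handled in the paper simply by the standing assumption that $n_K$ exists $H$-a.e.\ on $\partial K$.
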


\begin{proof}
For any $x\in K$, we have $\langle x,n_K(z)\rangle \le \langle z,n_K(z)\rangle$ for any $z\in\partial K$. Thus 
\[ 
\langle x, V(A)\rangle = \int_A \langle x,n_K(z)\rangle  dH(z)\leq \int_A \langle z,n_K(z)\rangle  dH(z) =n|\C(A)|, \ \forall x\in K. 
\]
It follows that $\frac{V(A)}{n|\C(A)|}\in K^\circ$.
If for some $x_0\in K$ one has $\langle x_0,\frac{V(A)}{n|\C(A)|} \rangle=1$, then clearly $x_0\in\partial K$ and $\frac{V(A)}{n|\C(A)|}\in \partial K^\circ$. Moreover, for almost all $x\in A$ one has $\langle x_0,n_K(x)\rangle=\langle x,n_K(x)\rangle$  thus $[x, x_0] \subset  \partial K$. 

\end{proof}

Using Proposition \ref{ineq} twice we obtain the following corollary.

\begin{corollary} \label{coro} 
Let $K$ be a symmetric convex body in $\R^n$. Consider two Borel subsets $A \subset \partial K$ and $B\subset \partial K^\circ$ such that $|\C(A)|>0$ and $|\C(B)|>0$. Then one has 
 \[\langle V(A), V(B) \rangle  \le n^2|\C(A)||\C(B)|.\]
If there is equality then $[a,\frac{V(B)}{n|\C(B)|}]\subset\partial K$ and $[b,\frac{V(A)}{n|\C(A)|}]\subset \partial K^\circ$ for almost all $a\in A$ and $b\in B$.

\end{corollary}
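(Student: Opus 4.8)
The plan is to apply Proposition \ref{ineq} twice---once to $K$ and once to its polar $K^\circ$---and then to splice the two resulting membership statements together using the definition of the polar body. The structural fact that makes this work is the biduality $(K^\circ)^\circ=K$, which holds because $K$ is symmetric (hence closed and convex with $0$ in its interior), together with the observation that $K^\circ$ is itself a symmetric convex body, so Proposition \ref{ineq} applies to it verbatim.

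First I would apply Proposition \ref{ineq} to the body $K$ and the set $A\subset\partial K$. Since $\abs{\C(A)}>0$, this produces the point $p:=\frac{V(A)}{n\abs{\C(A)}}\in K^\circ$. Next I would apply the same proposition to the symmetric convex body $K^\circ$ and the set $B\subset\partial K^\circ$, where now $V$ and $\C$ are computed using the outer normal of $K^\circ$; since $\abs{\C(B)}>0$, this yields $q:=\frac{V(B)}{n\abs{\C(B)}}\in(K^\circ)^\circ=K$.

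The inequality is then immediate. Because $p\in K^\circ$ and $q\in K$, the defining property of the polar body gives $\langle q,p\rangle\le 1$, that is,
\[
\left\langle\frac{V(B)}{n\abs{\C(B)}},\frac{V(A)}{n\abs{\C(A)}}\right\rangle\le 1,
\]
and clearing denominators yields exactly $\langle V(A),V(B)\rangle\le n^2\abs{\C(A)}\abs{\C(B)}$.

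For the equality case I would feed each constructed point back into the equality clause of Proposition \ref{ineq}. If $\langle q,p\rangle=1$, then taking $x_0=q\in K$ in the proposition applied to $K$ (so that $\langle x_0,p\rangle=1$) gives $[a,q]=[a,\frac{V(B)}{n\abs{\C(B)}}]\subset\partial K$ for almost all $a\in A$. Symmetrically, taking the point $p\in K^\circ$ as the boundary witness in the proposition applied to $K^\circ$ (so that $\langle p,q\rangle=1$) gives $[b,p]=[b,\frac{V(A)}{n\abs{\C(A)}}]\subset\partial K^\circ$ for almost all $b\in B$, which is the asserted conclusion. The argument presents no genuine obstacle; the only thing demanding care is bookkeeping, namely keeping the roles of body versus polar and the two orientation conventions straight so that the double application of the previous proposition is glued consistently by biduality.
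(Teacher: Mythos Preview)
Your proof is correct and follows exactly the approach the paper indicates: the paper simply says ``Using Proposition \ref{ineq} twice we obtain the following corollary'' without spelling out details, and your argument---applying Proposition \ref{ineq} to $K$ with $A$ to get $p\in K^\circ$, to $K^\circ$ with $B$ to get $q\in(K^\circ)^\circ=K$, then invoking $\langle p,q\rangle\le1$ and feeding the equality condition back into each application---is precisely that double application.
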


\begin{proof}[Proof of  the inequality of Theorem \ref{thm:main}]
Since the volume product is continuous, the inequality for an arbitrary convex body follows by approximation by centrally symmetric smooth strictly convex bodies (see \cite{Sc} section 3.4). 
Since the volume product is  linearly invariant, we may assume, using Remark \ref{rm:linear} that  $K$ is  equipartitioned  by the standard  orthonormal basis.
For $\eps\in\{-1;1\}^3$ and  any set $L\subset\R^3$ we define $L(\eps)$ to be the intersection of $L$ with the $\eps$-octant:
\[ 
L(\eps)=\{x\in L; \eps_i x_i\ge0;\    i=1,2,3\}.
\]
From the equipartition of volumes one has $\abs{K(\eps)}=\abs{K}/8$ for every $\eps \in \{-1,1\}^3$.
For $\eps \in\{-1,1\}^3$ let $N(\eps):=\{\eps'\in \{-1,1\}^3: |\eps-\eps'|=2\}$. In other words $\eps' \in N(\eps)$ if  $[\eps, \eps']$ is an edge of the cube $[-1,1]^3$. 
Using Stokes theorem we obtain $V(\partial (K(\eps)))=0$ hence 
\begin{equation}\label{eq:sec}
  V((\partial K)(\eps))=-\sum_{\eps' \in N(\eps)} V(K(\eps) \overrightarrow{\cap} K(\eps'))=\sum_{i=1}^3 \frac{|K\cap e^\perp_{i}|}{4} \eps_{i} e_{i}
\end{equation}
where in the last equality we used the equipartition of areas of $K\cap e^\perp_{i}$.

Since $K$ is strictly convex and smooth, there exists a diffeomorphism $\varphi:\partial K\to \partial K^\circ$ such that $\langle \varphi(x),x\rangle=1$. 
We extend $\varphi$ to $\R^3$ by homogeneity of degree one: $\varphi(\lambda x)=\lambda \varphi(x)$, for any $\lambda\geq 0$ and $x\in\partial K$. 

Then  $K^\circ=\bigcup_{\eps}\varphi (K(\eps))$ and $|K^\circ|=\sum_{\eps}|\varphi (K(\eps))|$. 
From the equipartition of volumes one has
\[\abs{K}\abs{K^\circ} =\sum_{\eps } \abs{K} \abs{\varphi (K(\eps))}=8\sum_{\eps} \abs{K(\eps)}\abs{\varphi (K(\eps))}.\]
From Corollary \ref{coro} we deduce that for every $\eps\in\{-1,1\}^3$
\begin{eqnarray}\label{ineq:coro}
\abs{K(\eps)}\abs{\varphi (K(\eps))} \ge \frac{1}{9} \langle  V((\partial K)(\eps)), V(\varphi(\partial K)(\eps)))\rangle. 
\end{eqnarray}
Thus, using \eqref{eq:sec}
\begin{align}\label{eq:meyer}
\abs{K}\abs{K^\circ}&\ge\frac{8}{9}\sum_{\eps}  \langle V( (\partial K)(\eps)),V( \varphi((\partial K)(\eps)))\rangle = 
\frac{8}{9} \sum_\eps\langle  \sum_{i=1}^3 \frac{ |K\cap e^\perp_{i}|}{4} \eps_i e_i,   V( \varphi((\partial K)(\eps)))\rangle \nonumber\\
 &= \frac{8}{9}\sum_{i=1}^3 \frac{ |K\cap e^\perp_{i}|}{4} \langle e_i, \sum_\eps \eps_i  V( \varphi((\partial K)(\eps)))\rangle.
\end{align}
By Stokes theorem $V(\varphi( \partial K(\eps)))=0$, therefore
\[
V(\varphi( (\partial K)(\eps))) =-\sum_{\eps' \in N(\eps)}V(\varphi(K(\eps) \overrightarrow{\cap} K(\eps'))).
\]
Recall that we have chosen orientations so that for every $\eps' \in N(\eps)$
\[
V(\varphi(K(\eps) \overrightarrow{\cap} K(\eps')))=-V(\varphi(K(\eps')\overrightarrow{\cap} K(\eps))).
\]
Substituting in \eqref{eq:meyer} we obtain
\[
\abs{K}\abs{K^\circ}\ge\frac{8}{9}\sum_{i=1}^3 \frac{ |K\cap e^\perp_{i}|}{4} \langle e_i, \sum_\eps \eps_i  \sum_{\eps'\in N(\eps)}V(\varphi(K(\eps')\overrightarrow{\cap} K(\eps))) \rangle.
\]
If  $[\eps,\eps']$ is an edge of the cube $[-1,1]^3$ let $c(\eps,\eps')$ be the coordinate in which $\eps$ and $\eps'$ differ. For every $i=1,2,3$ we have
\begin{align}\label{eq:punch}
\sum_\eps \eps_i  \sum_{\eps'\in N(\eps)}V(\varphi(K(\eps')\overrightarrow{\cap} K(\eps))) = \sum_\eps  \sum_{\eps'\in N(\eps): c(\eps,\eps')=i}\eps_i V(\varphi(K(\eps')\overrightarrow{\cap}  K(\eps)))  \\
+ \sum_\eps \sum_{\eps'\in N(\eps): c(\eps,\eps')\neq i}\eps_i  V(\varphi(K(\eps')\overrightarrow{\cap}  K(\eps))). \nonumber
\end{align}
The first part of the right hand side of \eqref{eq:punch} can be rewritten as a sum of terms of the form
\[  \eps_i V(\varphi(K(\eps')\overrightarrow{\cap}  K(\eps)))  + \eps_i' V(\varphi(K(\eps)\overrightarrow{\cap}  K(\eps'))) =2\eps_i V(\varphi(K(\eps')\overrightarrow{\cap}  K(\eps))),\]
since $\eps_i=-\eps_i'$. Thus for each $i$, the first part of the right hand side of \eqref{eq:punch} equals
\[\sum_{|\eps-\eps'|=2, \eps_i=1,\eps'_i=-1} 2V(\varphi(K(\eps')\overrightarrow{\cap}  K(\eps))=2V(\varphi(K \cap e_i^\perp)),\] 
where $K\cap e_i^\bot$ is oriented in the direction of $e_i$.
The second part of the sum \eqref{eq:punch} can be rewritten  as a sum of terms of the form
\[\eps_i V(\varphi(K(\eps')\overrightarrow{\cap}  K(\eps))) + \eps_i' V(\varphi(K(\eps)\overrightarrow{\cap}  K(\eps')))=0,\]
since in this case $\eps_i=\eps_i'$.
Let  $P_i$ be the orthogonal projection on the plane $e_i^\bot$. 
Then $P_i: \varphi(K\cap e_i^\bot)\to P_i (K^\circ)$ is a bijection thus from equation (\ref{wedge}) we get
\begin{equation}\label{eq:scalar}
\langle V (\varphi(K\cap e_i^\bot)), e_i\rangle=|P_i(K^\circ)|.
\end{equation}

Since the polar of a section is the projection of the polar, $P_i(K^\circ)=P_i(\varphi(K\cap e_i^\bot))=(K\cap e_i^\bot)^{\circ}$, and we obtain
\begin{eqnarray}\label{ineq:end}
\abs{K}\abs{K^\circ}\ge \frac{4}{9}\sum_{i=1}^3  |K\cap e^\perp_{i}| |P_i(K^\circ)|\ge \frac{4}{9}\cdot 3\cdot \frac{4^2}{2}=\frac{4^3}{3!}=\frac{32}{3},
\end{eqnarray}
where we used the $2$-dimensional Mahler inequality
\begin{eqnarray}\label{ineq:mahler2d}
\abs{K\cap e_i^\bot}\abs{P_i(K^\circ)}\ge \frac {4^2}{2}=8.
\end{eqnarray}
\end{proof}

\begin{remark}
In higher dimensions an equipartition result is not at our disposal, but the generalization of the rest of the proof is straightforward and provides a new large family of examples for which the Mahler conjecture holds.
\end{remark}

\begin{proposition}
If $K \subset \R^n$ is a centrally symmetric convex body that can be partitioned with hyperplanes $H_1,H_2\ldots H_n$ into $2^n$ pieces of the same volume such that each section $K\cap H_i$ satisfies the Mahler conjecture and is partitioned into $2^{n-1}$ regions of the same $(n-1)$-dimensional volume by the remaining hyperplanes, then
\[\abs{K}\abs{K^\circ}\ge \frac{4^n}{n!}.\]
\end{proposition}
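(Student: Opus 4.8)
The plan is to mimic the three-dimensional argument in Theorem~\ref{thm:main} verbatim, replacing the role of the coordinate basis by the hyperplanes $H_1,\ldots,H_n$ supplied by hypothesis and using the $(n-1)$-dimensional Mahler inequality for each section $K\cap H_i$ in place of the two-dimensional one. After a linear change of coordinates I may assume $H_i=e_i^\perp$, so that the equal-volume partition and the equal-$(n-1)$-volume partition of each section are exactly the two bulleted properties used in the derivation. As before, approximate $K$ by a smooth strictly convex symmetric body so that the radial diffeomorphism $\varphi:\partial K\to\partial K^\circ$ with $\langle\varphi(x),x\rangle=1$ exists, and extend it by degree-one homogeneity.

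First I would set up the octant notation $L(\eps)$ for $\eps\in\{-1,1\}^n$, note that the volume equipartition gives $|K(\eps)|=|K|/2^n$, and run the Stokes-theorem computation to get the analogue of \eqref{eq:sec}, namely $V((\partial K)(\eps))=\sum_{i=1}^n \frac{|K\cap e_i^\perp|}{2^{n-1}}\,\eps_i e_i$, where the coefficient $\tfrac14$ is replaced by $\tfrac{1}{2^{n-1}}$ because each section is split into $2^{n-1}$ equal parts. Next, decompose $K^\circ=\bigcup_\eps \varphi(K(\eps))$ and write $|K||K^\circ|=2^n\sum_\eps |K(\eps)|\,|\varphi(K(\eps))|$. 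Applying Corollary~\ref{coro} with $A=(\partial K)(\eps)$ and $B=\varphi((\partial K)(\eps))$ yields, in analogy with \eqref{ineq:coro}, the bound $|K(\eps)|\,|\varphi(K(\eps))|\ge \frac{1}{n^2}\langle V((\partial K)(\eps)),V(\varphi((\partial K)(\eps)))\rangle$.

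Then I would carry out the telescoping in \eqref{eq:punch}: splitting the sum over edges $[\eps,\eps']$ according to whether the differing coordinate $c(\eps,\eps')$ equals $i$, the terms with $c(\eps,\eps')\ne i$ cancel in pairs (since $\eps_i=\eps_i'$), while those with $c(\eps,\eps')=i$ combine (since $\eps_i=-\eps_i'$) to produce $2\,V(\varphi(K\cap e_i^\perp))$, the flux through the $i$-th central section oriented along $e_i$. Using \eqref{eq:scalar}, $\langle V(\varphi(K\cap e_i^\perp)),e_i\rangle=|P_i(K^\circ)|$, and the identity $P_i(K^\circ)=(K\cap e_i^\perp)^\circ$ (polar of a section equals projection of the polar), I obtain
\[
|K||K^\circ|\ge \frac{2^{n-1}}{n^2}\sum_{i=1}^n |K\cap e_i^\perp|\,|P_i(K^\circ)|.
\]
Finally, invoking the hypothesis that each section $K\cap e_i^\perp$ satisfies the $(n-1)$-dimensional Mahler conjecture, $|K\cap e_i^\perp|\,|(K\cap e_i^\perp)^\circ|\ge \frac{4^{n-1}}{(n-1)!}$, gives $|K||K^\circ|\ge \frac{2^{n-1}}{n^2}\cdot n\cdot\frac{4^{n-1}}{(n-1)!}=\frac{2^{3n-3}}{n\,(n-1)!}$, which I expect to simplify to exactly $\frac{4^n}{n!}$ after tracking the powers of two carefully. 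The main obstacle I anticipate is purely bookkeeping: getting every power of $2$ right in the Stokes coefficient and in the prefactor $2^{n-1}/n^2$ so that the constants telescope to the clean value $4^n/n!$; the geometric and topological content is already fully supplied by Corollary~\ref{coro} and the Stokes cancellation, with no new ideas required beyond the assumed inductive Mahler bound on sections.
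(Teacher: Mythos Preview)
Your approach is exactly the paper's: the paper's proof of this proposition is a two-line sketch saying the argument of Theorem~\ref{thm:main} goes through verbatim with the obvious changes of constants, and you have correctly identified every one of those changes (the $2^n$ replacing $8$, the $1/n^2$ from Corollary~\ref{coro}, the $1/2^{n-1}$ in the Stokes formula, and the factor $2$ from the edge-pairing).

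The only issue is the arithmetic slip you already anticipated. Your displayed prefactor $\tfrac{2^{n-1}}{n^2}$ is wrong; collecting the factors gives
\[
\frac{2^n}{n^2}\cdot\frac{1}{2^{n-1}}\cdot 2=\frac{4}{n^2},
\]
so that
\[
|K||K^\circ|\ \ge\ \frac{4}{n^2}\sum_{i=1}^n |K\cap e_i^\perp|\,|P_i(K^\circ)|\ \ge\ \frac{4}{n^2}\cdot n\cdot\frac{4^{n-1}}{(n-1)!}=\frac{4^n}{n!}.
\]
Note that your tentative value $\tfrac{2^{3n-3}}{n!}$ equals $\tfrac{4^n}{n!}=\tfrac{2^{2n}}{n!}$ only when $n=3$, so it would not have simplified to the right answer without correcting the prefactor.
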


The proof is the same, the first inequality has a $\frac{2^n}{n^2}$ factor in front. This time there are $2^{n-1}$ parts on each section and each one appears twice so we multiply by a factor of $\frac{1}{2^{n-2}}$ and the sum has $n$ terms, so the induction step introduces a factor of $\frac{4}{n}$ as desired.

\section{Equality case}\label{sec:equality}
In this section we prove that the only symmetric three dimensional convex bodies that achieve equality in the Mahler conjecture are linear images of the cube and of the cross polytope.

The strategy is to look at the steps in the proof of Theorem \ref{thm:main} where inequalities were used. Specifically, the analogous theorem in dimension $2$ implies that the coordinate sections of $K$ satisfy the Mahler conjecture and therefore are parallelepipeds. Combinatorial analysis of how these sections can interact with the equipartition yields the equality case. The major ingredient of the analysis on how these sections can coexist in a convex body is corollary \ref{coro}. At first one might think that the situation is extremely rigid and there are just a few evident ways in which a convex body with coordinate square sections might be the union of $8$ cones. In fact there is a large family of positions of the cube for which it is equipartitioned by the standard orthogonal basis, and more than one way of positioning the octahedra so that it is equipartitioned by the standard orthogonal basis, as it will become evident in the proof.\\

First we show that the symmetric convex bodies which are equipartitioned by the standard  orthonormal  basis (see Remark \ref{rm:linear})  are uniformly bounded. 
We use the notation $\conv(\cdot)$ to denote the convex hull.

\begin{lemma}\label{lm:bounded} Let $L \subset \R^2$ be a symmetric convex body equipartitioned by the standard orthonormal basis. Then
\begin{equation}\label{eq:r2bound}
B_1^2 \subset L \subset \sqrt{2} B_2^2.
\end{equation}
Let $K \subset \R^3$ be a symmetric convex body equipartitioned by the standard orthonormal  basis. Then 
$$
B_1^3 \subset K \subset 54\sqrt{2} B_1^3.
$$
\end{lemma}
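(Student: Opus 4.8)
The two inclusions $B_1^2\subset L$ and $B_1^3\subset K$ are immediate and I would dispose of them first: since $\pm e_i\in\partial K\subset K$ and $K$ is convex, $K\supseteq\conv\{\pm e_1,\pm e_2,\pm e_3\}=B_1^3$, and the same argument in the plane gives $B_1^2\subset L$. Thus the content lies entirely in the two outer bounds, and I would settle the planar case first, since the spatial one will be reduced to it.

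For the planar upper bound the plan is to examine the point of $L$ farthest from the origin. Let $p$ realize the circumradius $R$; after relabelling the quadrants I may assume $p=(a,b)$ lies in the closed first quadrant, and since equipartition forces $L$ to meet each axis in exactly the unit segment $[-e_i,e_i]$, if $R>1$ then $a,b>0$. The three inputs I would use are: (i) the supporting line of $L$ at $p$ is orthogonal to $p$, so $L\subset\{\langle x,p\rangle\le R^2\}$; (ii) because $\pm e_2\in L$ (resp. $\pm e_1\in L$) the outer normal of $L$ at $e_1$ (resp. at $e_2$) makes an angle at most $\pi/4$ with $e_1$ (resp. $e_2$), which pins the boundary arc carrying $p$ between the vertices $e_1,e_2$; and (iii) the four open quadrants have equal area, which ties the area of the quadrant containing $p$ to that of a neighbouring quadrant. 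Feeding (i)--(iii) into the area identity for the quadrant of $p$ reduces the matter to a finite-dimensional optimisation whose extremal configuration is the square $[-1,1]^2$, yielding $a^2+b^2\le 2$, i.e. $L\subset\sqrt2\,B_2^2$. I expect this optimisation to be the delicate point; a reassuring sanity check is that dropping the equal-area hypothesis (iii) genuinely destroys the bound, as one can then push $p$ arbitrarily far while keeping the $\pm e_i$ on the boundary.

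For the spatial case I would first transfer the planar bound to the coordinate sections. Each section $K\cap e_i^\perp$ is a symmetric planar convex body with $\pm e_j,\pm e_k\in\partial(K\cap e_i^\perp)$, and by the defining property of the equipartition the two remaining coordinate planes cut it into four pieces of equal area; hence $K\cap e_i^\perp$ is equipartitioned by the standard basis of the plane $e_i^\perp$, and the planar part of the lemma gives $K\cap e_i^\perp\subset\sqrt2\,B_2^2$. In particular every point of $K$ lying in a coordinate plane has Euclidean norm at most $\sqrt2$.

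The remaining task — bounding $K$ in the open octants — is genuinely harder, because the section bound alone does not suffice: the diagonal spike $\conv(B_1^3\cup\{\pm M(1,1,1)\})$ has all three coordinate sections inside the $\sqrt2$-disk yet arbitrarily large circumradius, and what excludes it is the equipartition of volume. The route I would take is to bound the total volume $\abs{K}$ and then convert this into an extent bound: if $p$ is the $\ell_1$-farthest point of $K$, then $K\supset\conv(B_1^3\cup\{\pm p\})$ contains a cone over a facet of $B_1^3$ of height proportional to $\|p\|_1$, giving an estimate of the form $\|p\|_1\le 3\abs{K}+1$ and hence $K\subset(3\abs{K}+1)\,B_1^3$. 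It then remains to bound $\abs{K}$, using the eightfold volume equipartition together with the bounded sections, and the deliberately generous constant $54\sqrt2$ should fall out of this step. I anticipate the volume bound to be the main obstacle: the bounded sections control the central slices (by Brunn--Minkowski every parallel slice is no larger than the central one, which has area at most $\abs{\sqrt2\,B_2^2}$), but converting this into a bound on $\abs{K}$ must again invoke the volume equipartition to stop mass from escaping along a diagonal. This is the three-dimensional analogue of the planar rigidity, now mediated by volumes rather than areas.
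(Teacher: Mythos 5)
Your easy inclusions, your transfer of the planar bound to the coordinate sections, and your observation that bounded sections alone cannot bound $K$ (the diagonal-spike example) are all correct and agree with the paper's structure. But both quantitative steps that carry the actual content of the lemma are left as acknowledged placeholders, and in each case what is missing is precisely the idea. In the plane, the ``finite-dimensional optimisation whose extremal configuration is the square'' is asserted, not performed, and nothing in your (i)--(iii) as stated produces the inequality; note also that (i) is redundant, since $L\subset R B_2^2$ already gives $L\subset\{x:\langle x,p\rangle\le R^2\}$. The paper avoids any optimisation by an explicit two-sided area estimate: for $a=(a_1,a_2)\in L$ in the closed positive quadrant with $a_1+a_2\ge 1$, the first-quadrant piece contains $\conv(0,e_1,a,e_2)$, of area $\frac{a_1+a_2}{2}$, while the fourth-quadrant piece is contained in the quadrilateral $\conv(0,b,e_1,-e_2)$, where $b$ is the intersection of the line through $a,e_1$ with the line through $-a,-e_2$ (a point of $L$ beyond either of these lines would force $e_1$ or $-e_2$ into the interior of $L$); equating the two quadrant areas yields $(a_1-\tfrac12)^2+(a_2-\tfrac12)^2\le\tfrac12$ for every such $a$, hence $L\subset\sqrt2\,B_2^2$. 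If you wish to keep your framework, you must actually carry out a computation of this kind; as written, the planar case is an unproved claim.

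In dimension three the gap is fatal as written: your route requires a bound on $\abs{K}$, you provide no mechanism for it, and you yourself flag it as ``the main obstacle''. Bounding $\abs{K}$ from the equipartition and the bounded sections is not easier than the lemma itself, so your (correct) cone estimate $\|p\|_1\le 3\abs{K}+1$ reduces the problem to its own hard part. The missing mechanism, which is the paper's key step, is the following. Set $R=\max_{a\in K}\|a\|_1$. The octant piece containing the maximiser contains the convex hull of $0$, the three signed basis vectors of that octant, and the maximiser, hence has volume at least $R/6$; by the volume equipartition every piece $K(\varepsilon)$ has volume at least $R/6$, and since $K(\varepsilon)$ lies in the scaled octant $\bigl(\max_{a\in K(\varepsilon)}\|a\|_1\bigr)B_1^3(\varepsilon)$, whose volume is $\max_{a\in K(\varepsilon)}\|a\|_1^3/6$, every octant contains a point $a(\varepsilon)$ with $\|a(\varepsilon)\|_1\ge R^{1/3}$, so with $\|a(\varepsilon)\|_\infty\ge R^{1/3}/3$. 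Among these eight points, at least three attain their $\ell_\infty$-norm at the same coordinate $i$, and two of those, lying in distinct octants $\varepsilon\neq\varepsilon'$, have $i$-th coordinates of equal sign. Because they lie in distinct octants yet agree in sign at coordinate $i$, they must disagree in sign (or vanish) at some coordinate $j\neq i$, so some point of the segment $[a(\varepsilon),a(\varepsilon')]\subset K$ lies in $K\cap e_j^\perp$ while its $i$-th coordinate still has absolute value at least $R^{1/3}/3$. The planar bound applied to the equipartitioned section $K\cap e_j^\perp$ then gives $R^{1/3}/3\le\sqrt2$, i.e.\ $R\le(3\sqrt2)^3=54\sqrt2$. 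This bounds the body directly, with no volume bound and no cone argument; it is exactly the ingredient your outline needs and does not contain.
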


\begin{proof} The first inclusion follows from the fact $\pm e_1, \pm e_2 \in \partial L$.  Our goal is to see how far any point in $L$ may be situated from $B_1^2$. Consider a point $a
=a_1e_1+a_2e_2 \in L \cap (\R_+)^2 \setminus B_1^2$. Note that $|a_1-a_2| \le 1$, since otherwise
$e_j$ would be in the interior of $\conv (a, \pm e_i)$ for $i\not=j$, which would contradict the equipartition assumption. It follows that
$
|L \cap (\R_+)^2| \ge|\conv(0,e_1,e_2,a)|= \frac{a_1+a_2}{2}. 
$
Let $b$ be the intersection of the line through $a$ and $e_1$ and  the line through $-a$ and $-e_2$.  Then
$
L \cap \{ x_1 \ge 0, x_2 \le 0\} \subset \conv(0, b, e_1, -e_2),
$
Indeed, otherwise one of the points $e_1$ or $-e_2$ would not be on the boundary of $L$.  Using that  $|\conv(0, b, e_1, -e_2)|=\frac{b_1-b_2}{2}$ and the equipartition property of $L$, a direct computation shows that
$(a_1-\frac{1}{2})^2+  (a_2-\frac{1}{2})^2 \le \frac{1}{2}.$
Repeating this observation for all quadrants of $L$  we get the result.
 
Now  consider a symmetric convex body $K \subset \R^3$ equipartitioned by the standard orthonormal basis. Let us prove that 
\begin{equation}\label{eq:one-third}
\max_{a\in K(\eps)}\|a\|_1\ge\max_{a\in K}\|a\|_1^{1/3},\quad \forall\eps\in\{-1;1\}^3,
\end{equation}
where $\|a\|_1=\sum |a_i|$. Let $\eps_0\in\{-1;1\}^3$ and $a(\eps_0)\in K(\eps_0)$ such that $\|a(\eps_0)\|_1=\max_{a\in K}\|a\|_1$. Then 
$$
|K(\eps_0)| \ge |\conv(0,e_1,e_2, e_3,a(\eps_0))|=\|a(\eps_0)\|_1/6.
$$
For any $\eps\in\{-1;1\}^3$, one has $K(\eps)\subset\max_{a\in K(\eps)}\|a\|_1B_1^3(\eps)$ thus $|K(\eps)|\le\max_{a\in K(\eps)}\|a\|_1^3/6$, this together with the equipartition property of $K$ gives (\ref{eq:one-third}).

Let $R=\max_{a\in K}\|a\|_1$. Then, from (\ref{eq:one-third}), for all $\eps$ there exists $a(\eps)=a_1(\eps)e_1+a_2(\eps)e_2+a_3(\eps) e_3 \in K(\eps)$ such that $\|a(\eps)\|_1 \ge R^{1/3}$. Note that $\|a(\eps)\|_\infty=\max_i |a_i(\eps)| \ge R^{1/3}/3$. 
Moreover, there exists $\eps \not =\eps'$,  and  $i\in \{1,2,3\}$ such that
\begin{equation}\label{eq:comb}
 |a_i(\eps)|=\|a(\eps)\|_\infty \ge \frac{R^{1/3}}{3},\quad   |a_i(\eps')|=\|a(\eps')\|_\infty \ge \frac{R^{1/3}}{3} \quad\mbox{and}\quad  \sign a_i(\eps) = \sign a_i(\eps').
\end{equation}
Indeed,  among the $8$ vectors $a(\eps)$,  at least three  achieve their $\ell_\infty$-norm at the same coordinate $i$ and,  among these three vectors, at least two have this coordinate of the same sign. 

Consider $a(\eps)$ and $a(\eps')$ as in (\ref{eq:comb}), then $\lambda a(\eps) +(1-\lambda) a(\eps') \in K$ for all $\lambda \in [0,1]$. Since $\eps \not =\eps'$,  there exists a coordinate $j \not=i$ such that either $a_j(\eps)=a_j(\eps')=0$  or $\sign a_j(\eps) \not =\sign a_j(\eps')$.  For some $\lambda \in [0,1]$,  one has
$
\lambda a_j(\eps) +(1-\lambda) a_j(\eps') =0$  and thus    $\lambda a(\eps) +(1-\lambda) a(\eps') \in K \cap e_j^\perp.
$
Using the equipartition of $L:=K \cap e_j^\perp$  and (\ref{eq:r2bound}), together with the properties of $a(\eps)$ and $a(\eps')$,  we get
$$
\frac{R^{1/3}}{3} \le  | (\lambda a_i(\eps) +(1-\lambda) a_i(\eps'))| \le \sqrt{2}.
$$
\end{proof}

Using Lemma \ref{lm:bounded}, we prove the following approximation lemma.

\begin{lemma}\label{approx} Let $L\subset\R^3$ be a symmetric convex body. Then there exists a sequence $(L_m)_m$ of smooth symmetric strictly convex bodies which converges to $L$ in Hausdorff distance and a sequence $(T_m)_m$ of linear invertible maps which converges to a linear invertible map $T$ such that for any $m$ the bodies $T_mL_m$ and $TL$ are equipartitioned by the standard orthonormal basis.
\end{lemma}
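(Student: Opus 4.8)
The plan is to combine the standard smooth approximation of convex bodies with the existence statement of Remark \ref{rm:linear}, and then to extract a convergent subsequence of the equipartitioning linear maps by means of the uniform bounds supplied by Lemma \ref{lm:bounded}. First I would produce, by the usual mollification procedure (see \cite{Sc}, Section 3.4) carried out symmetrically so as to preserve central symmetry, a sequence $(L_m)_m$ of smooth, symmetric, strictly convex bodies with $L_m\to L$ in the Hausdorff distance. Applying Remark \ref{rm:linear} to each $L_m$ yields $S_m\in GL(3)$ such that $S_mL_m$ is equipartitioned by the standard orthonormal basis. The difficulty is that the equipartition is highly non-unique (as emphasized in Section \ref{sec:equality}), so the maps $S_m$ need not converge on the nose; this is precisely why I would argue by compactness rather than by an implicit-function or continuous-selection scheme, which would require a transversality that is not available here.

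The key compactness input is Lemma \ref{lm:bounded}. Since $L$ is a convex body with $0$ in its interior and $L_m\to L$, there are constants $0<\rho<R$ with $\rho B_2^3\subset L_m\subset R B_2^3$ for all large $m$, while Lemma \ref{lm:bounded} gives $B_1^3\subset S_mL_m\subset 54\sqrt 2\,B_1^3$. Chaining these inclusions, $S_m(\rho B_2^3)\subset 54\sqrt 2\,B_2^3$ bounds $\|S_m\|_{op}$ from above, and $\tfrac1R B_1^3\subset S_m(B_2^3)$ forces $\abs{\det S_m}\ge |B_1^3|/(R^3|B_2^3|)>0$. Hence $(S_m)$ ranges over a compact subset of $GL(3)$, so after passing to a subsequence (which I relabel, together with the corresponding $L_m$, and keep calling $L_m, S_m$) I may assume $S_m\to T$ with $T$ invertible. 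Setting $T_m:=S_m$ then gives $T_m\to T$ with each $T_mL_m$ equipartitioned and $L_m\to L$.

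It remains to verify that the limit body $TL$ is itself equipartitioned by the standard basis, and this is the main obstacle: one must check that all the defining conditions are closed under Hausdorff convergence. Since $T_m\to T$ and $L_m\to L$ with all bodies uniformly bounded, $T_mL_m\to TL$ in the Hausdorff distance, and $B_1^3\subset TL$ keeps $0$ in the interior of $TL$ and of every coordinate section. Continuity of volume then transfers the equality of the eight octant volumes to $TL$; the coordinate sections $T_mL_m\cap e_i^\perp$ converge to the nondegenerate section $TL\cap e_i^\perp$, so the four quadrant areas pass to the limit as well. Finally, the condition $\pm e_i\in\partial(T_mL_m)$ is closed: on one hand $e_i\in T_mL_m$ forces $e_i\in TL$, and on the other, passing to the limit of support functions shows that a point interior to $TL$ would be interior to $T_mL_m$ for large $m$, so $e_i$ cannot lie in the interior of $TL$ and hence lies on $\partial(TL)$. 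Thus $TL$ satisfies every requirement of Remark \ref{rm:linear}, which completes the argument.
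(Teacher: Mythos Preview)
Your proof is correct and follows essentially the same route as the paper's: smooth symmetric approximation, apply the equipartition theorem to each $L_m$, use Lemma~\ref{lm:bounded} together with uniform sandwiching of the $L_m$ to get compactness of the linear maps in $GL(3)$, extract a convergent subsequence, and conclude by continuity that the limit body is equipartitioned. The paper compresses your last paragraph into the single phrase ``by continuity,'' whereas you spell out why the octant volumes, the section quadrant areas, and the boundary condition $\pm e_i\in\partial(TL)$ all pass to the limit; this extra care is welcome but does not change the strategy.
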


\begin{proof}
From  \cite{Sc} section 3.4 there exists a sequence $(L_m)_m$ of smooth symmetric strictly convex bodies converging to $L$ in Hausdorff distance. From Theorem \ref{thm:equipart} there exists a linear map $T_m$ such that $T_mL_m$ is equipartitioned by the standard orthonormal basis. Since $(L_m)_m$  is a convergent sequence it is also  a bounded sequence and  thus there exist constants $c_1, c_2>0$  such that  $c_1B_2^3 \subset L_m \subset c_2B_2^3$. Moreover,  from Lemma \ref{lm:bounded} there exist $c'_1, c'_2>0$ such that $ c'_1B_2^3 \subset T_m L_m \subset c'_2B_2^3$. Thus  $\sup_m\{\|T_m\|, \|T^{-1}_m\|\}<\infty$ and we may select a subsequence $T_{m_k}$ which converges to some invertible linear map $T$. Then $T_{k_m}L_{k_m}$ converges to $TL$.  By continuity, $TL$ is also equipartitioned by the standard orthonormal basis.
\end{proof}

We also need the following lemma.

\begin{lemma}\label{lm:square} Let $P$ be a centrally symmetric parallelogram with non empty interior. Assume that $P$ is equipartitioned by the standard orthonormal axes
  then $P$ is a square.
\end{lemma}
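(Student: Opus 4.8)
The plan is to pin down the vertices of $P$ using the two hypotheses and a short determinant computation. Write $P=\conv(\pm p,\pm q)$, where $\pm p,\pm q$ are the four vertices. The $2$-dimensional part of Lemma \ref{lm:bounded} gives $B_1^2\subset P$, and since $\pm e_1,\pm e_2\in\partial P$, the arc of $\partial P$ lying in the closed first quadrant is a polygonal path from $e_1$ to $e_2$. As $P$ has only four edges, I will label the vertices counterclockwise as $p,q,-p,-q$ with $p$ in the closed first quadrant ($p_1,p_2\ge0$) and $q$ in the closed second quadrant ($q_1\le0\le q_2$); then $e_2$ lies on the edge $[p,q]$ and $e_1$ on the edge $[-q,p]$. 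Note the two extreme instances: $[-1,1]^2$ arises as $p=(1,1),\,q=(-1,1)$, and the diamond $B_1^2$ as the degenerate $p=e_1,\,q=e_2$.

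Next I would extract three equations. Splitting each quadrant piece into two triangles based at the origin gives
\[
\abs{P\cap\{x_1,x_2\ge0\}}=\tfrac12\bigl(\det(e_1,p)+\det(p,e_2)\bigr)=\tfrac12(p_1+p_2),
\]
and likewise $\abs{P\cap\{x_1\le0\le x_2\}}=\tfrac12(q_2-q_1)$. The equipartition hypothesis therefore reads $p_1+p_2=q_2-q_1$. The two boundary conditions $e_2\in[p,q]$ and $e_1\in[-q,p]$ are the collinearity relations $\det(p-e_2,q-e_2)=0$ and $\det(p-e_1,-q-e_1)=0$, which expand to
\[
p_1q_2-p_2q_1=p_1-q_1,\qquad p_1q_2-p_2q_1=p_2+q_2 .
\]

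Now I would simply solve the linear system. The two collinearity relations share the left-hand side $p_1q_2-p_2q_1$, so $p_1-q_1=p_2+q_2$; together with the area relation $p_1+p_2=q_2-q_1$ this forces $p_1=q_2$ and $q_1=-p_2$, i.e. $q=(-p_2,p_1)$ is $p$ rotated by a right angle. Consequently the vertices $p,q,-p,-q$ are successive $90^\circ$ rotations of one another, so $P$ is a square. (The one remaining collinearity equation then collapses to $p_1^2+p_2^2=p_1+p_2$; this merely records that $e_1,e_2$ lie on the edges and is met by a one–parameter family of squares, among them $[-1,1]^2$ and $B_1^2$.)

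The only genuine subtlety — the main obstacle — is justifying the combinatorial normalization that $P$ has one vertex in each quadrant. The alternative is that $p$ and $q$ lie in a single pair of opposite quadrants, so one edge of $P$ spans the entire first quadrant along the line through $e_1$ and $e_2$. I would rule this out with the same area computation: writing the endpoints of that edge as $(1-\tau_1,\tau_1)$ and $(1-\tau_2,\tau_2)$ with $\tau_1<0<1<\tau_2$, one gets $\abs{P\cap\{x_1,x_2\ge0\}}=\tfrac12$ while equipartition forces $\tau_2=\tau_1+1$, which is incompatible with $\tau_1<0<1<\tau_2$; the boundary cases $\tau_1=0$ or $\tau_2=1$ put $e_1$ or $e_2$ at a vertex and return to the main configuration. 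Thus this configuration cannot be equipartitioned, and $P$ must be a square. Everything past the combinatorial case split is a routine determinant calculation.
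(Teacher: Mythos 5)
Your proof is correct, but it takes a genuinely different route from the paper's. The paper's proof normalizes rather than computes: it chooses a linear map $S$ with $S(P)=B_\infty^2$, observes that the lines $\R Se_1$ and $\R Se_2$ then equipartition the square, and uses the invariance of $B_\infty^2$ under rotation by $\pi/2$: the rotated lines also equipartition, and if they did not coincide with the original pair, a cone carrying $1/4$ of the area would strictly contain another such cone, a contradiction. Hence the rotation swaps the two lines, so $Se_1\perp Se_2$, and since both points lie on $\partial B_\infty^2$ and correspond under the rotation they have equal norm $\lambda$; thus $\frac{1}{\lambda}S$ is an isometry and $P$ is a square. That argument is shorter, coordinate-free, and avoids any case analysis on where the vertices of $P$ sit. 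Your argument instead stays in the original coordinates and reduces everything to three determinant identities and a two-line elimination; it is more elementary and yields extra information that the paper's proof does not make visible: adjacent vertices are related by $q=(-p_2,p_1)$, and the residual equation $p_1^2+p_2^2=p_1+p_2$ exhibits exactly the one-parameter family of admissible squares, which nicely corroborates the paper's remark in Section 4 that a large family of positions of the cube is equipartitioned by the standard basis. The price is the combinatorial normalization you singled out, and there is one small imprecision there: when no closed quadrant pairing of the form you assume exists, the long edge may equally well span the \emph{second} quadrant through $e_2$ and $-e_1$ (all vertices interior to the first and third quadrants); your computation covers this configuration only after applying the reflection $(x_1,x_2)\mapsto(-x_1,x_2)$, which is harmless since it preserves the equipartition hypothesis, the condition $\pm e_1,\pm e_2\in\partial P$, and the conclusion, but it deserves an explicit mention.
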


\begin{proof} For some invertible linear map $S$, one has   $S(P)=B_\infty^2$ and the lines $\{t Se_1: t \in \R\}$, $\{t Se_2: t \in \R\}$  equipartition $B_\infty^3$. The square $B_\infty^2$ remains invariant when we apply a rotation of angle $\pi/2$. The rotated lines must also equipartition $B_\infty^2$, this implies that the rotated lines are also invariant since otherwise one cone of $1/4$ of the area would be contained in another cone of area $1/4$. Moreover  $Se_1,Se_2\in\partial B_\infty^2$, thus  $\|Se_1\|_2=\|Se_2\|_2:=\lambda$, and we can conclude that   $\frac 1 \lambda S$ is an isometry and $P$ is a square. 
\end{proof}
 
We shall use the following easy lemma. 
  
 \begin{lemma}\label{observation} Let $K$ be a convex body in $\R^3$. 
 If two segments $(p_1,p_2)$ and $[p_3,p_4]$  are included in $\partial K$ and satisfy $(p_1,p_2)\cap [p_3,p_4]\neq\emptyset$, 
 then $\conv(p_1,p_2,p_3, p_4)\subset H\cap K\subset\partial K$, for some supporting plane $H$ of $K$.
 \end{lemma}
 
 \begin{proof}
Let $H$ be a supporting plane of $K$ such that $[p_3,p_4]\subset H$. Denote by $n$ the exterior normal of $H$. Let $x\in(p_1,p_2)\cap [p_3,p_4]$. Then $\langle p_i,n\rangle\le \langle x,n\rangle$ for all $i$. Moreover there exists $0<\la<1$ such that $x=(1-\la)p_1+\la p_2$. Since $\langle x,n\rangle=(1-\la) \langle p_1,n\rangle+\la\langle p_2,n\rangle \le \langle x,n\rangle$ one has $p_1,p_2\in H$. Since $H\cap K$ is convex, we conclude that $\conv(p_1,p_2,p_3, p_4)\subset H\cap K$.
 \end{proof}
 
 \noindent
 \begin{proof}[Proof of the equality case of Theorem 1.]
Let $L$ be a symmetric convex body such that $\abs{L}\abs{L^\circ}=\frac{32}{3}$. Applying Lemma \ref{approx} and denoting $K=TL$ and $K_m=T_mL_m$, we get that the bodies $K_m$ are smooth and strictly convex symmetric, the bodies $K_m$ and $K$ are equipartitioned by the standard orthonormal basis and the sequence $(K_m)_m$ converges to $K$ in Hausdorff distance. 
Applying inequality (\ref{ineq:end}) to  $K_m$ and taking the limit we get 
$$
\frac{32}{3}=\abs{K}\abs{K^\circ}=\lim_{m\to+\infty}|K_m||K_m^\circ|\ge\frac{4}{9}\sum_{i=1}^3 \lim_{m\to+\infty} |K_m\cap e^\perp_{i}| |P_i(K_m^\circ)|= \frac{4}{9}\sum_{i=1}^3  |K\cap e^\perp_{i}| |P_i(K^\circ)|\ge \frac{32}{3},
$$
where we used (\ref{ineq:mahler2d}). Thus we deduce that  for all $i=1,2,3$,
\[
\abs{K\cap e_i^\bot}\abs{P_i(K^\circ)}= 8.
\]
Reisner \cite{Reisner} and Meyer \cite{Meyer} showed that if Mahler's equality in $\R^2$ is achieved then the corresponding planar convex body is a parallelogram. It follows that the sections of $K$ by the planes $e_i^\bot$ are parallelograms, which are equipartitioned by the standard orthonormal axes. From  Lemma \ref{lm:square} the  coordinate sections $K\cap e_i^\bot$ are squares and one may write 
\[K\cap e_i^\bot=\conv(a_\omega;\ \omega_i=0;\ \omega_j\in\{-;+\}, \forall j\neq i),\quad\hbox{where}\quad a_\omega\in\sum_{i=1}^3\R_{\omega_i}e_i.
\] 
For example 
$K\cap e_3^\bot=\conv(a_{++0}, a_{+ - 0}, a_{-+0}, a_{--0}).$
We discuss the four different cases, which also appeared in \cite{IS}, depending on  the location of the vertices of $K\cap e_i^\bot$.\\

{\bf Case 1.} If exactly one of the coordinate sections of $K$ is the canonical $B_1^2$ ball, then  $K$ is a parallelepiped. \\ 
For instance, suppose that for
  $i=1,2$, $K\cap e_i^\bot\neq\conv(\pm e_j; j\neq i)$, but $K\cap e_3^\bot=\conv(\pm e_1, \pm e_2)$.  We apply Lemma \ref{observation} to the segments  $[a_{+0+}, a_{-0+}]$ and $[a_{0++}, a_{0-+}]$
to get that the supporting plane $S_3$ of $K$ at $e_3$ contains the quadrilateral $F_3:=\conv(a_{+0+}, a_{0++}, a_{-0+}, a_{0-+})$ and $e_3$ is in the relative interior of the face $K\cap S_3$ of $K$. Moreover $(a_{+0+}, a_{+0-})$ and $[e_1,e_2]$ are included in $\partial K$ and intersect at $e_1$. Thus from Lemma \ref{observation} the triangle $\conv(a_{+0+}, a_{+0-},e_2)$ is included in a face of $K$. In the same way, this face also contains the triangle $\conv(a_{0+-}, a_{0++},e_1)$. Thus the quadrilateral $\conv(a_{+0+}, a_{+0-}, a_{0+-}, a_{0++})$ is included in a face of $K$. In the same way, we get 
that the quadrilateral $\conv(a_{0+-}, a_{0++}, a_{-0+}, a_{-0-})$ is included in a face of $K$. We conclude that $K$ is a symmetric body with $6$ faces, thus a parallelepiped.\\

{\bf Case  2.} If exactly two of the coordinate sections of $K$ are canonical $B_1^2$ balls, then $K=B_1^3$. \\
For instance, suppose that 
 for every $i=1,2$, $K\cap e_i^\bot=\conv(\pm e_j; j\neq i)$, but $K\cap e_3^\bot\neq\conv(\pm e_1, \pm e_2)$.  Then the segments $(a_{++0}, a_{+-0})$ and $[e_1,e_3]$ are included in $\partial K$ and intersect at $e_1$. Thus from Lemma \ref{observation}, the triangle $\conv(a_{++0}, a_{+-0}, e_3)$ is 
in a face of $K$. Using the other octants, we conclude that $K=\conv(e_3,-e_3, K\cap e_3^{\perp})$.\\

{\bf Case  3.} If for every $1\le i\le 3$, $K\cap e_i^\bot\neq\conv(\pm e_j; j\neq i)$, then $K$ is a cube. \\
One has  $(a_{++0}, a_{+-0})\subset \partial K$ and $(a_{+0+}, a_{+0-})\subset\partial K$ and  these segments intersect at $e_1$.  From Lemma~\ref{observation}, the supporting plane $S_1$ of $K$ at $e_1$ contains the quadrilateral $\conv(a_{++0}, a_{+-0}, a_{+0+}, a_{+0-})$ and $e_1$ is in the relative interior of the face $K\cap S_1$.   Reproducing this in each octant, 
we conclude that $K$ is contained in the parallelepiped $L$ delimited by the planes $S_i$ and $-S_i$, $1\le i\le3$. Let $u_i\in\partial K^\circ$ be such that $S_i=\{x\in \R^3; \langle x, u_i \rangle =1\}$. It follows that $K^{\circ}$ contains  the octahedron $L^\circ= \conv(\pm u_1, \pm u_1,\pm u_3)$. Observe that $K\cap e_i^\perp= L\cap e_i^\perp$ 
and thus 
$
P_i K^\circ= P_i L^{\circ},
$
 for each  $i=1,2,3$. 
Let $H_i$ the oriented plane spanned by $u_j, u_k$, with the positive side containing $u_i$, where $i, j, k$ are distinct indices, $i,j,k \in \{1,2,3\}$ and let $H_i^{\eps_i}$ be the half-space containing $\eps_i u_i$. Let  
$$
K^{\circ}_\eps =K^{\circ}\cap H_1^{\eps_1} \cap H_2^{\eps_2}\cap H_3^{\eps_3} \mbox{ and } (\partial  K^{\circ})_\eps =\partial K^{\circ}\cap H_1^{\eps_1} \cap H_2^{\eps_2}\cap H_3^{\eps_3}.
$$
  We  note that $P_i(u_j),$  $i\not = j$ is orthogonal to an edge of $K \cap e_i^\perp$, thus
$P_i(u_j)$ is a vertex of $P_i(K^\circ)$, taking into account that $P_i(K^\circ)$ is a square we get 
$$
P_i(K^\circ)=P_i (\conv(\pm u_j, \pm u_k))=P_i(K^\circ \cap H_i),
$$ where $i, j, k$ are distinct indices, $i,j,k \in \{1,2,3\}$. Similarly to (\ref{eq:scalar}),  from equation (\ref{wedge}) we get
\begin{equation}\label{eq:descr}
\langle V(K^\circ \cap H_i), e_i\rangle=|P_i(K^\circ)|,
\end{equation}
where $K^\circ\cap H_i$ is oriented in the direction of $u_i$.
As in the proof 
of Theorem \ref{thm:main},  we  use  the equipartition of $K$, to  get
$$
\frac{32}{3}=|K||K^{\circ}|=8\sum_{\eps} |K(\eps) | |K^{\circ}_\eps|  \ge  \frac{8}{9} \sum \langle  V((\partial K)(\eps)), V((\partial K^{\circ})_\eps)\rangle,
$$
where the last inequality follows  from  Corollary \ref{coro}. Since $K^\circ_\eps$  is a convex body one has 
\[
V((\partial K^{\circ})_\eps) =-\sum_{\eps'\in N(\eps)}V(K^\circ_\eps \overrightarrow{\cap}  K^\circ_{\eps'}).
\]
We thus may continue  as in the proof of Theorem  \ref{thm:main}, we finally get
$$
 \frac{32}{3}=|K||K^{\circ}|=8\sum_{\eps} |K(\eps) | |K^{\circ}_\eps|  \ge  \frac{8}{9} \sum \langle V((\partial K)(\eps)), V((\partial K^{\circ})_\eps)\rangle\ge \frac{4}{9} \sum_{i=1}^3 |K \cap e_i^{\perp}|  |P_i(K^{\circ})| =\frac{32}{3}.
$$
It follows that $\langle \frac{V((\partial K)(\eps))}{3 |K(\eps) |}, \frac{V((\partial K^{\circ})_\eps)}{3 |K^\circ_\eps |}\rangle =1$. Using Corollary \ref{coro} we define the points 
\begin{equation}\label{eq:points}
y_\eps:= \frac{V((\partial K)(\eps))}{3 |K(\eps) |} \in\partial K^{\circ} \mbox{ and   } x_\eps:=\frac{V((\partial K^{\circ})_\eps)}{3 |K^\circ_\eps |}\in \partial K, 
\end{equation}
for all $\eps\in \{-1,1\}^3$.  Since $[e_1,x_{+,+,+}]\subset \partial K$ one has $x_{+++}\in S_1$. In the same way $x_{+++}\in S_2$ and $x_{+++}\in S_3$.  Reproducing this in each octant and for each $x_\eps$, we conclude that $K=L$.\\
  
{\bf Case  4.} If for every $1\le i\le 3$, $K\cap e_i^\bot=\conv(\pm e_j; j\neq i)$, then $B_1^3=K$.\\
 Since $B_1^3\subset K$ one has $K^\circ\subset B_\infty^3$ and $P_i(K^\circ)=P_i( B_\infty^3)$.   
If there exists  $\eps \in \{-1;1\}^3$ such that $\eps \in K^\circ$, then  $K(\eps) =B_1^3(\eps)$ and using equipartition property of $K$ we get $K=B_1^n$. Assume, towards the contradiction,  that $K^\circ \cap   \{-1;1\}^3 =\emptyset$. Since  $P_iK^\circ= [-1,1]^3\cap e_i^\perp$,  
each {\it open} edge $(\varepsilon,\varepsilon')$ of the cube $[-1,1]^3$  contains at least
one point $C_{\varepsilon,\varepsilon'}\in \partial K^\circ$.   
Using the symmetry of $K^\circ$ we may assume that the twelve selected points $C_{\varepsilon,\varepsilon'}$ are pairwise symmetric.  
For each $i=1,2,3,$ consider the $4$ points $C_{\varepsilon,\varepsilon'}$ belonging to edges of $[-1,1]^3$ parallel to the direction $e_i$. They generate a plane $H_i$ passing through the origin. Thus, we have defined $3$ linearly  independent planes   $H_1, H_2$ and $H_3$  (note that linear independence follows from the fact that $H_i$ does not contain the vertices of the cube and passes through edges parallel to $e_i$ only).  Those planes define a partition of $K^\circ$ into $8$ regions with non-empty interior, moreover,
$$
P_i(K^\circ)=P_i(K^\circ \cap H_i).  
$$
We repeat verbatim the construction of Case 3 using the partition of $K^\circ$ by the planes $H_i$ and defining $H_i^{\eps_i}$ to be a half-space containing $\eps_i e_i$. Using Proposition \ref{ineq} and Corollary  \ref{coro}  we get  that  for each point of $y\in \partial K^\circ$, for some $\varepsilon\in \{-1;1\}^3$,  one has
$[y,y_\eps]\subset (\partial K^\circ)_\eps$ (see  (\ref{eq:points}) for the definition of $y_\eps$).  Together with the fact that each face of $B_\infty^3$  intersects $K^\circ$ on a facet of $K^\circ$ this gives that $B_\infty^3=K^\circ$, which contradicts our assumption that 
$K^\circ \cap   \{-1;1\}^3 =\emptyset$.
\end{proof}

\section{Stability}\label{sec:stability}

The goal of this section is to establish the stability of the $3$ dimensional Mahler inequality.  
 The {\it Banach-Mazur distance} between symmetric convex bodies $K$ and $L$ in $\R^n$ is defined as
  $$d_{BM}(K,L) = \inf \{d\ge1 : L\subset TK\subset dL, \mbox{ for some } T\in{\rm GL}(n)\}.$$
 We refer to \cite{AGM} for properties of the Banach-Mazur distance, in particular, for John's theorem, which states that $d_{BM}(K, B_2^n) \le \sqrt{n}$ for any symmetric convex  body $K\subset \R^n$ and thus $d_{BM}(K, L) \le n$ for any pair of symmetric convex  bodies $K,L \subset \R^n$. 
 
\begin{thm}\label{thm:stability_BM}
There exists an absolute constant $C>0$, such that for every symmetric convex body $K \subset \R^3$ and $\delta>0$  satisfying
$\Pp(K) \leq (1+ \delta)\Pp(B_\infty^3)$, one has
$$
\min\{ d_{BM}(K, B_\infty^3), d_{BM}(K, B_1^3)\} \le 1+C\delta.
$$
\end{thm}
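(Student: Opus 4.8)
The plan is to revisit every inequality in the proof of Theorem~\ref{thm:main} and record how much slack each one can carry once $\Pp(K)\le(1+\delta)\Pp(B_\infty^3)$, so that closeness to the cube or the cross-polytope comes out with the slack still \emph{linear} in $\delta$. First I would pass to a good position: approximating $K$ by smooth strictly convex symmetric bodies and using Lemma~\ref{approx}, and since $\Pp$ and both distances $d_{BM}(\,\cdot\,,B_\infty^3)$, $d_{BM}(\,\cdot\,,B_1^3)$ are $GL(3)$-invariant, I may assume $K$ itself is equipartitioned by the standard basis. Writing $s_i=\abs{K\cap e_i^\perp}$ and $p_i=\abs{P_i(K^\circ)}=\abs{(K\cap e_i^\perp)^\circ}$, the chain~(\ref{ineq:end}) forces $\frac49\sum_i s_ip_i\in[\tfrac{32}{3},\tfrac{32}{3}(1+\delta)]$; since each $s_ip_i\ge8$ by~(\ref{ineq:mahler2d}), subtracting shows $s_ip_i\le 8(1+3\delta)$ for every $i$, and the first inequality of~(\ref{ineq:end}) --- the octant-by-octant estimate~(\ref{ineq:coro}) coming from Corollary~\ref{coro} --- is tight up to an additive error $O(\delta)$.

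The first of these facts says that each coordinate section is nearly extremal for the planar Mahler inequality~(\ref{ineq:mahler2d}). I would establish (the excerpt being self-contained, by a direct computation rather than by citation) a \emph{linear} stability form of~(\ref{ineq:mahler2d}): if a symmetric planar body $M$ has $\Pp(M)\le8(1+\eta)$ then $d_{BM}(M,B_\infty^2)\le1+c\eta$. Perturbing the extremal square by truncating a corner or bulging an edge shows that $\Pp$ grows linearly transverse to the $GL(2)$-orbit of the square, so the rate is genuinely linear and not merely $\sqrt{\eta}$. Applied to $M=K\cap e_i^\perp$ with $\eta=3\delta$, each section is $(1+c\delta)$-close to a parallelogram. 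Because each section has $\pm e_j,\pm e_k$ on its boundary and is equipartitioned by the axes, I would then prove a quantitative form of Lemma~\ref{lm:square}: such a body, $(1+c\delta)$-close to a parallelogram, is $(1+C_1\delta)$-close, \emph{in the fixed coordinates of the plane $e_i^\perp$}, to one of the two canonical squares $\conv(\pm e_j,\pm e_k)=B_1^2$ or $\{|x_j|\le1,\ |x_k|\le1\}=B_\infty^2$ (these being the only equipartitioned squares through $\pm e_j,\pm e_k$, and lying at fixed-coordinate distance a factor $2$ apart, so that for small $\delta$ each section is unambiguously close to exactly one of them).

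The near-tightness of~(\ref{ineq:coro}) in each octant is what controls the three-dimensional shape. Through the equality discussion in Corollary~\ref{coro} and Proposition~\ref{ineq}, it forces the normalized vectors playing the role of $y_\eps,x_\eps$ in~(\ref{eq:points}) to pair to within $1-O(\delta)$, and hence the segments $[a,y_\eps]$ to lie within $O(\delta)$ of $\partial K$ for almost every $a$ on the octant boundary. This is the quantitative replacement for Lemma~\ref{observation}: two near-segments of $\partial K$ that nearly cross force $\partial K$ to be $O(\delta)$-flat between them, so that up to a $(1+C_2\delta)$ dilation $K$ is the union of the eight cones over its coordinate sections.

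Finally I would run the case analysis of the equality proof quantitatively, the dichotomy being governed by how many sections are close to $B_1^2$: if at least two sections are $(1+C_1\delta)$-close to $B_1^2$ then, as in Cases~2 and~4, the near-cone structure forces $B_1^3\subset K\subset(1+C\delta)B_1^3$; otherwise, as in Cases~1 and~3, the $O(\delta)$-flat faces produced in the eight octants assemble into a parallelepiped $L$ with $L\subset K\subset(1+C\delta)L$, giving $d_{BM}(K,B_\infty^3)\le1+C\delta$. Passing back through the approximation yields the statement for the original body. I expect the main obstacle to be exactly this last assembly: one must glue the locally flat pieces obtained separately in the different octants into a \emph{single} global parallelepiped, respectively cross-polytope, and show that the $O(\delta)$ errors from the near-cone structure and from the near-square sections accumulate only additively across the finitely many octants. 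Keeping all constants absolute and, above all, the loss linear in $\delta$ --- rather than the purely qualitative closeness that a compactness argument off the equality case would give --- is the crux of the proof.
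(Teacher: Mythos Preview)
Your plan is a genuinely different route from the paper's, and it is worth contrasting them.

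The paper does \emph{not} revisit the proof of Theorem~\ref{thm:main} quantitatively. Instead it proceeds as follows: (i) reduce by John's theorem to the compact set $A=\{K:\ B_2^3\subset K\subset\sqrt{3}B_2^3\}$ with the Hausdorff metric; (ii) apply the general metric Lemma~\ref{lem:metric} with $f(K)=\Pp(K)$ and $D=\{\Pp(B_\infty^3)\}$; (iii) identify $f^{-1}(D)$ using the equality case just proved; and (iv) verify the hypothesis of part~(2) of Lemma~\ref{lem:metric} by invoking the \emph{local} linear stability result of Nazarov--Petrov--Ryabogin--Zvavitch \cite{NazZva}, which says that near the cube (hence by duality near $B_1^3$) one has $\Pp(K)\ge(1+\beta(d_{BM}(K,B_\infty^3)-1))\Pp(B_\infty^3)$. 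Lemma~\ref{lem:metric} then upgrades this local linear estimate to a global one: away from $f^{-1}(D)$ compactness bounds $\Pp(K)-\Pp(B_\infty^3)$ below by a positive constant, and near $f^{-1}(D)$ the local result applies. No quantitative $2$-dimensional stability, no quantitative Lemma~\ref{lm:square}, and no octant-by-octant gluing is needed.

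Your last sentence suggests you believe a compactness argument can only yield a qualitative conclusion; the paper's point is precisely that compactness plus a \emph{local} linear rate gives a \emph{global} linear rate, via Lemma~\ref{lem:metric}. This is why the paper's proof is a few lines long once \cite{NazZva} and the equality case are in hand.

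Your direct approach is in principle viable and has the merit of being self-contained (not importing \cite{NazZva}), but it is substantially harder. The step you flag as the main obstacle --- assembling the eight $O(\delta)$-flat pieces into a single parallelepiped or cross-polytope with only additive error --- is indeed the crux, and you would also need to handle the threshold between the ``$B_1^2$-type'' and ``$B_\infty^2$-type'' sections cleanly (in the equality proof this is a discrete four-case split, but quantitatively a section can sit near the boundary between types). None of this is carried out in your sketch. If you want a short proof, the paper's compactness-plus-local-stability route is the one to take.
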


We start with a general simple lemma on compact metric spaces and continuous functions.
 \begin{lemma}\label{lem:metric}
  Let $(A,d)$ be a compact metric space, $(B,d')$  a metric space, $f:A\to B$ a continuous function and $D$ a closed subset of $B$. Then, 
   \begin{enumerate}
  \item For any $\beta>0$ there exists $\alpha>0$ such that $d(x,f^{-1}(D))\ge\beta$ implies $d'(f(x),D)\ge\alpha$.
   \item If  there exists  $c_1,c_2>0$ such that $d(x,f^{-1}(D))<c_1$ implies $d'(f(x), D))\ge c_2d(x,f^{-1}(D))$ then  for some $C>0$, one has
$d(x,f^{-1}(D)) \le C d'(f(x), D)), \  \forall x \in A.$
  \end{enumerate}
 \end{lemma}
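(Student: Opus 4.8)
The plan is to prove the two assertions in order, using part (1) as the key ingredient for part (2).

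For (1) I would argue by compactness. Write $g(x) := d'(f(x), D)$; since $y \mapsto d'(y, D)$ is $1$-Lipschitz and $f$ is continuous, $g$ is continuous on $A$, and likewise $x \mapsto d(x, f^{-1}(D))$ is continuous. Hence the set
\[
A_\beta := \{x \in A : d(x, f^{-1}(D)) \ge \beta\}
\]
is closed, and being a closed subset of the compact space $A$ it is compact. If $A_\beta = \emptyset$ then (1) holds vacuously; otherwise the continuous function $g$ attains a minimum value $\alpha$ on $A_\beta$. Were $\alpha = 0$, it would be attained at some $x_0 \in A_\beta$ with $d'(f(x_0), D) = 0$, and since $D$ is closed this forces $f(x_0) \in D$, i.e. $x_0 \in f^{-1}(D)$ and $d(x_0, f^{-1}(D)) = 0 < \beta$ --- contradicting $x_0 \in A_\beta$. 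Thus $\alpha > 0$, which is precisely the statement of (1).

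For (2) I would set $r(x) := d(x, f^{-1}(D))$ and $s(x) := d'(f(x), D)$ and split into a near regime and a far regime. When $r(x) < c_1$ the quantitative hypothesis gives immediately $r(x) \le \tfrac{1}{c_2}\, s(x)$. When $r(x) \ge c_1$, I would apply part (1) with $\beta = c_1$ to obtain $\alpha > 0$ with $s(x) \ge \alpha$; since $A$ is compact it is bounded, so $R := \sup_{x \in A} r(x) \le \operatorname{diam}(A) < \infty$, and therefore $r(x) \le R \le \tfrac{R}{\alpha}\, s(x)$. Taking $C := \max\{1/c_2,\, R/\alpha\}$ then yields $r(x) \le C\, s(x)$ for every $x \in A$.

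The argument is essentially routine, and the only points that genuinely use the hypotheses are, first, the closedness of $D$ together with the compactness of $A$ to exclude $\alpha = 0$ in (1), and second, the observation that (1) is exactly the tool needed to handle the far regime of (2), the near regime being covered by the assumed linear bound. Implicit throughout part (2) is that $f^{-1}(D) \ne \emptyset$, so that $r$ is finite-valued; this is automatic as soon as the hypothesis of (2) is non-vacuous, since any $x$ with $r(x) < c_1 < \infty$ already forces $f^{-1}(D) \ne \emptyset$.
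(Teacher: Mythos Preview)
Your proof is correct and follows essentially the same route as the paper: the minimum of the continuous function $x\mapsto d'(f(x),D)$ on the compact set $A_\beta$ for part~(1), and the near/far split (with the far regime handled via part~(1) and the diameter bound) for part~(2). Your added remark on $f^{-1}(D)\ne\emptyset$ is a nice point of care that the paper leaves implicit.
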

 \begin{proof} (1) Let $\beta>0$ such that $A_\beta = \{x \in A:  d(x, f^{-1}(D) \ge \beta\}\neq\emptyset$. Then $A_\beta$ is compact and since the function $x\mapsto d'(f(x), D))$ is continuous on $A_\beta$, it reaches its infimum $\alpha$ at some point $x_0\in A_\beta$. We conclude that for all $x\in A_\beta$ one has $d'(f(x), D))\ge\alpha=d'(f(x_0), D))>0$ since $x_0\notin f^{-1}(D)$.\\ 
 (2) Consider two cases. First assume that $d(x,f^{-1}(D))<c_1$, then  $d'(f(x), D) \ge c_2d(x,f^{-1}(D))$ and we select $C=1/c_2$. Now assume
 that $d(x,f^{-1}(D)) \ge c_1$, then using (1), with $\beta=c_1$ we get
 $$
d'(f(x),D)\ge\alpha \ge \frac{\alpha}{{\rm diam}(A)}  d(x,f^{-1}(D)) \  \forall x \in A,
 $$
 where ${\rm diam}(A)$ is the diameter of the metric space $A$ and we conclude with $C=\max\{1/c_2, \frac{{\rm diam}(A)}{\alpha} \}$.
 \end{proof}

\begin{proof}[Proof of Theorem \ref{thm:stability_BM}] The proof follows from Lemma \ref {lem:metric}  together with the stability theorem proved in \cite{NazZva} and equality cases proved in the previous section. Using the linear invariance of the volume product, together with John's theorem we reduce to the case when  $B_2^3\subseteq K \subseteq \sqrt{3} B_2^3$. Our metric space $A$ will be  the set of such bodies  with the Hausdorff metric $d_H$ (see, for example, \cite{Sc}). Let $B=\R$.  Then $f:A \to B$, defined by  $f(K)=\Pp(K)$,  is continuous on $A$ (see for example \cite{FMZ}).  Finally, let $D=\Pp(B_\infty^3)$. Using the description of the equality  cases proved in the previous section we get that
$$
f^{-1}(D)=\{K\in A; \Pp(K)=\Pp(B_\infty^3)\}=\{K\in A; \exists\ S\in {\rm SO}(3); K=S B_\infty^3\ \hbox{or}\ K=\sqrt{3}SB_1^3\}.
$$
Note that $B_\infty^3$ is in John position (see for example \cite{AGM}) and thus  if $ B_2^3 \subset T B_\infty^3 \subset \sqrt{3}B_2^3$ for some $T \in GL(3)$, then $T \in SO(3)$.

We show that the assumptions in the  second part of Lemma \ref{lem:metric} are satisfied. As observed in  \cite{BH} the result of \cite{NazZva} may be stated in the following way: there exists  constants $\delta_0(n), \beta_n>0$, depending on dimension $n$ only,  such that  for every  symmetric convex body $K \subset \R^n$ satisfying $d_{BM}(K, B_\infty^n) \le 1+ \delta_0(n)$ we get
$$
\P(K) \ge  (1+\beta_n (d_{BM}(K, B_\infty^n)-1))\P(B_\infty^n).
$$
Using that $d_{BM}(K^\circ, L^\circ)= d_{BM}(K, L)$,  we may restate the $3$ dimensional version of the above stability theorem in the following form: there are absolute constants $c_1, c_2 >0$ such that  for every  symmetric convex body $K \subset \R^3$ satisfying
$\min\{ d_{BM}(K, B_\infty^3), d_{BM}(K, B_1^3)\} :=1+d \le  1+c_1,$
one has
$$
\P(K) \ge  \P(B_\infty^3)+ c_2 d.
$$
To finish checking the assumption, note that for all $K,L$ convex bodies such that $B_2^3\subseteq K,L \subseteq \sqrt{3} B_2^3$:
\begin{eqnarray}\label{eq:dist}
d_{BM}(K, L)-1 \le   \min_{T\in GL(3)} d_H (TK, L) \le  \sqrt{3}(d_{BM}(K, L) -1 ). 
\end{eqnarray}
Applying Lemma \ref{lem:metric} we deduce that there exists $C>0$ such that for all $K$ such that $B_2^3\subseteq K \subseteq \sqrt{3} B_2^3$:
\[
\min_{S\in SO(3)}\min(d_H(K,SB_\infty^3), d_H(K,S\sqrt{3}B_1^3))\le C|\P(K)-\P(B_\infty^3)|.
\]
Using (\ref{eq:dist}) we conclude the proof.

\end{proof}

\begin{remark} The same method as in the proof of Theorem \ref{thm:stability_BM}, i.e. applying Lemma  \ref{lem:metric}, known equality cases and the results from \cite{Kim, NazZva} can be used to present  shorter proofs of the stability theorems given in \cite{BH, ZvK}.
\end{remark}

\vspace{1mm}

\noindent
{\footnotesize\sc Matthieu Fradelizi:}
  {\footnotesize LAMA, Univ Gustave Eiffel, Univ Paris Est Creteil, CNRS, F-77447 Marne-la-Vallée, France. }\\[-1.3mm]
  {\footnotesize e-mail: {\tt matthieu.fradelizi@univ-eiffel.fr \tt }}\\

\noindent
{\footnotesize\sc Alfredo Hubard:}
  {\footnotesize LIGM, Univ. Gustave Eiffel, CNRS, ESIEE Paris, F-77454 Marne-la-Vallée, France }\\
  {\footnotesize e-mail: {\tt alfredo.hubard@univ-eiffel.fr \tt }}\\

\noindent
{\footnotesize \sc Mathieu Meyer: }
  {\footnotesize LAMA, Univ Gustave Eiffel, Univ Paris Est Creteil, CNRS, F-77447 Marne-la-Vallée, France. }\\
  {\footnotesize e-mail: {\tt mathieu.meyer@univ-eiffel.fr \tt }}\\

\noindent
  {\footnotesize{\sc Edgardo Rold\'an-Pensado:}
  {\footnotesize Centro de Ciencias Matem\'aticas, UNAM-Campus Morelia, Antigua Carretera a P\'atzcuaro \# 8701, Col. Ex Hacienda San Jos\'e de la Huerta, Morelia, Michoac\'an M\'exico, C.P. 58089.\\
  {\footnotesize e-mail: {\tt e.roldan@im.unam.mx \tt }}\\
   
   \noindent
  {\footnotesize{\sc Artem Zvavitch:}
  {\footnotesize Kent State University, Department of Mathematics, Kent, OH 44242, USA }\\
  {\footnotesize e-mail: {\tt  zvavitch@math.kent.edu  \tt }}

\end{document}